\title{Computing the Weil representation of a superelliptic curve}
\author{Irene I.~Bouw, Duc Khoi  Do, and Stefan Wewers}
\numberwithin{equation}{section}
\numberwithin{figure}{section}
\newtheorem{thm}{Theorem}[section]
\newtheorem{cor}[thm]{Corollary}
\newtheorem{prop}[thm]{Proposition}
\newtheorem{lem}[thm]{Lemma}
\theoremstyle{definition}
\newtheorem{defn}[thm]{Definition}
\newtheorem{exa}[thm]{Example}
\newtheorem{assumption}[thm]{Assumption}
\newtheorem{rem}[thm]{Remark}
\newcommand{\NN}{\mathbb{N}}
\newcommand{\ZZ}{\mathbb{Z}}
\newcommand{\QQ}{\mathbb{Q}}
\newcommand{\FF}{\mathbb{F}}
\newcommand{\CC}{\mathbb{C}}
\newcommand{\PP}{\mathbb{P}}
\newcommand{\GL}{\mathop{\rm GL}\nolimits}
\newcommand{\Gal}{\mathop{\rm Gal}\nolimits}
\newcommand{\p}{\mathfrak{p}}
\newcommand{\OO}{\mathcal{O}}
\newcommand{\Spec}{\mathop{\rm Spec}}
\newcommand{\Aut}{\mathop{\rm Aut}\nolimits}
\newcommand{\Frob}{\mathop{\rm Frob}\nolimits}
\newcommand{\Char}{\mathop{\rm char}\nolimits}
\newcommand{\Y}{\mathcal Y}
\newcommand{\Yb}{\overline{Y}}
\newcommand{\Xb}{\overline{X}}
\newcommand{\nr}{{\scriptstyle \rm nr}}
\newcommand{\et}{{\scriptstyle \rm et}}
\newcommand{\Tr}{{\rm Tr}}
\newcommand{\abs}[1]{\lvert#1\rvert}
\newcommand{\Id}{{\rm Id}}
\newcommand{\Kb}{\overline{K}}
\begin{document}

\begin{abstract}
	We study the Weil representation $\rho$  of a curve over a $p$-adic field  with potential reduction of compact type. We show that $\rho$ can be reconstructed from its stable reduction.  For superelliptic curves of the form $y^n=f(x)$ at primes $\p$ whose residue characteristic is
	prime to the exponent $n$ we make this explicit.
	
	\noindent 2020 \emph{Mathematics Subject Classification}. Primary
	11F80. Secondary: 14H25, 11G20, 11S40.
\end{abstract}

\maketitle
\section*{Introduction}

Let $Y/K$ be a smooth, projective, and absolutely irreducible curve over a finite extension $K/\QQ_p$. For simplicity, we assume that $g:=g(Y)\geq 2$. We fix an algebraic closure $\overline{K}$ of $K$ and write $Y_{\overline{K}}=Y\otimes_K \overline{K}$. We choose an auxiliary  prime $\ell\neq p$, and write $V_\ell:=H^1_\et(Y_{\overline{K}}, \QQ_\ell)$. The absolute  Galois group $\Gamma_K$ of $K$ acts naturally  on $V_\ell$. This defines the $\ell$-adic representation
\[
\rho_\ell:\Gamma_K\to \GL_{\QQ_\ell}(V_\ell),
\]
which is the key object of study in this paper. 

In this paper we assume that the curve $Y$ has potential  reduction of compact type. This means that the Jacobian of $Y$ has good reduction, after replacing $K$ by a finite extension, if necessary. 
In this case the $\ell$-adic representation $\rho_\ell$ defines a Weil representation $\rho:W_K\to \GL_\CC(V)$, which is independent of the choice of the auxiliary prime $\ell$. It is a complex representation of the Weil group $W_K<\Gamma_K$ that determines $\rho_\ell$.  We refer to Theorem \ref{thm:compact_type} for the precise statement. 

Let $\Gamma_{\FF_K}$ be the absolute Galois group of the residue field $\FF_K$ of $K$. We write $Z<\Gamma_{\FF_K}$ for the subgroup generated by the Frobenius automorphism  $\Frob_{\FF_K}$. The Weil group fits into an exact sequence
\[
1\to I_K\to W_K\stackrel{r}{\to} Z\to 1.
\]
A \emph{Frobenius element} $\phi$ is an element of $W_K$ such that $r(\phi)=\Frob_{\FF_K}.$

Write $K^{\nr}$ for the maximal unramified subextension of $\overline{K}/K$. There exists a finite Galois extension $L/K^{\nr}$ such that $Y_L:=Y\otimes_K L$ has a semistable model $\mathcal{Y}/\mathcal{O}_L$. We write $\Yb$ for the normalization of the special fiber of $\mathcal{Y}$. The curve $\overline{Y}$ admits a model $\overline{Y}_0$ defined over $\FF_K$, see Lemma \ref{lem:phi-models}. The assumption that $Y$ has reduction of compact type over $L$ implies that
\[
H^1_{\et}(Y_{\overline{K}}, \QQ_\ell)\simeq H^1_{\et}(\overline{Y}, \QQ_\ell)=:\overline{V}_\ell,
\]
see Proposition \ref{prop:compact_type}. 
The action of $\Gamma_K$ on $V_\ell$ factors through a subgroup
$G\subset \Aut_{\FF_K}(\Yb)$. 

Our method to compute the Weil representation $\rho$  uses a result of Dokchitser--Dokchitser \cite{DD}, which states that $\rho$ is determined by the local polynomial 
\[
P(\phi^{-1}, T):=\det(1-\phi T|_{\overline{V}_\ell}),
	\]
for the different Frobenius elements $\phi$. Since the image of $I_K$ is finite, it suffices to compute finitely many local polynomials.  
In Lemma \ref{lem:phi-models} we prove that there is a correspondence between Frobenius elements $\phi$ in the Weil group, and twists $\Yb_\phi$ of the model $\Yb_0$.  We may therefore compute the local polynomial $P(\phi^{-1}, T)$ by point counting on the curve $\Yb_\phi$. Combining the information on the different  Frobenius elements allows us to compute the action of $G$ on $\overline{V}_\ell$ explicitly from a stable model of $Y$.  Our method can be extended to curves that do not have reduction of compact type. In this case, one needs to replace the Weil representation by a Weil--Deligne representation. We come back to this in a future paper.

From  \S\ref{sec:superell} we assume that $Y$ is a superelliptic curve, i.e.~a curve defined by an affine equation
\[
   Y:\; y^m=f(x), \qquad f(x)\in K[x].
\] 
We consider the case that $Y$ has potential reduction of compact type and that $p\nmid m$. This is a condition that is easy to check. Moreover, in our previous paper \cite{superell}, we described how to determine the stable reduction of $Y$. Building on these results, we show in  \S\ref{sec:supermodels}  that the correspondence from Lemma \ref{lem:phi-models} between Frobenius elements $\phi$ and models $\Yb_\phi$ can be made completely explicit. We therefore obtain a practical procedure for computing the Weil representation of a superelliptic curve with  potential reduction of compact type.  We apply our method in \S\ref{sec:exa} to two Picard curves with potential good reduction to characteristic $p=2$ and one with potential reduction of compact type.

\section{The local Galois representation of a curve}\label{sec:localrep}

In this section, we recall some properties of the $\ell$-adic representation of a curve defined over a  local field. We restrict to  the case of curves whose reduction is potentially of compact type. In this case the $\ell$-adic representation defines a Weil representation. For more details on Weil representations, we refer to \cite{BH}, Chapters 28, 32, \cite{Tate}, and \cite{Rohrlich}.

\subsection{Weil representations}

Let $p$ be  prime and  $K/\QQ_p$ a finite extension. We choose an algebraic closure $\overline{K}$ of $K$, and fix it in the rest of this paper. 
 We denote the normalized  valuation of $K$ by $v_K$, the ring of integers by $\mathcal{O}_K$, and the absolute Galois group of $K$ by $\Gamma_K=\Gal(\bar{K}/K)$.  We write $q=p^s$ for the cardinality of the residue field $\FF_K$ of $K$. We write $k$ for the residue field of the unique extension of $v_K$ to $\overline{K}$. Note that $k$ is an algebraic closure of $\FF_K$. We obtain a canonical morphism
\begin{equation} \label{eq:restriction_morphism}
    \Gamma_K\to \Gamma_{\FF_K}:=\Gal(k/\FF_K).
\end{equation}

%If $L/K$ is an algebraic extension, we always assume that $L$ is a subfield of $\overline{K}$. Hence the absolute Galois group $\Gamma_L$ is an open subgroup \irene{also closed?} of $\Gamma_K$. Moreover, the residue field $\FF_L$ is a finite subextension of $k/\FF_K$. 

 % \irene{Next two paragraphs: Notation and terminology not yet compatible with rest}\stefan{I deleted "geometric" before "Frobenius"; the standard terminology calls this the arithmetic Frobenius and its inverse a geometric Frobenius.}
 We obtain a short exact sequence
 \[
 1\to I_K\to\Gamma_K\to\Gamma_{\FF_K}\to 1.
 \] 
 A \emph{Frobenius element}  is an element $\phi\in \Gamma_K$ that maps to $\Frob_q:x\mapsto x^{q}$ in $\Gamma_{\FF_K}$, where $q=|\FF_K|$. %We fix a Frobenius element $\phi\in \Gamma_K$, and write $Z<\Gamma_K$ for the subgroup it generates. \stefan{No, $Z$ is the subgroup of $\Gamma_{\FF_K}$ generated by $\Frob_q$.}  
 
  The \emph{Weil group} $W_K$ of $K$  is the subgroup of $\Gamma_K$ that fits in the exact sequence
 \[
 1\to I_K \to W_K \to Z\to 1,
 \]
 where $Z<\Gamma_{\FF_K}$ is the subgroup generated by $\Frob_q$.
 We define a topology on $W_K$ by endowing $I_K\subset \Gamma_K$ with the subspace topology and requiring that $I_K<W_K$ is an open subgroup. 

\begin{defn}
Let $V$ be a finite-dimensional $\CC$-vector space, endowed with the discrete topology.
\begin{itemize}
\item[(a)]
A \emph{Weil representation} of $K$ is a continuous group homomorphism 
\[
\rho:W_K\to \GL(V).
\]
\item[(b)] A Weil representation is called \emph{Frobenius semisimple} if the automorphism $\rho(\phi)\in \GL(V)$ is semisimple  for all Frobenius elements.
\end{itemize}
\end{defn}

Note that $I_K$  is compact. Therefore a group homomorphism  $\rho:W_K\to \GL(V)$ is a Weil representation if and only if there exists an open subgroup of $I_K$ whose image under $\rho$ is trivial. Recall that a Weil representation is called \emph{unramified} if the restriction to $I_K$ is trivial.  A Weil representation is called an \emph{Artin representation} if  it factors through the Galois group of a finite Galois extension $L/K$.

For a proof of the following result, we refer to \cite{BH}, Section 28.6.

\begin{prop} \label{prop:Weilirred}
Let $\rho$ be an irreducible Weil representation over $K$. Then it is finite dimensional, and there exists a $1$-dimensional unramified representation $\chi$ such that 
	\[
	\rho_0=\rho\otimes \chi^{-1}
		\] 
		 is an Artin representation. 
\end{prop}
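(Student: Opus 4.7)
\medskip
\noindent\textbf{Proof proposal.} Finite-dimensionality is built into the definition of a Weil representation, so nothing needs to be said there; the content lies in constructing the unramified twist $\chi$. My plan is to reduce the problem to a Schur's lemma argument on a suitable power of a Frobenius element. Since $\rho$ is continuous with target the discrete group $\GL(V)$, the kernel of $\rho$ is open, so $\rho|_{I_K}$ factors through a finite quotient and $\rho(I_K)$ is a finite subgroup of $\GL(V)$. This is the one piece of continuity input; everything else is algebraic.

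Next I would fix a Frobenius element $\phi_0\in W_K$. Because $I_K\triangleleft W_K$, conjugation by $\phi_0$ gives an automorphism of the finite group $\rho(I_K)$, and $\Aut(\rho(I_K))$ is finite. Hence there exists $N\geq 1$ such that $\rho(\phi_0)^N=\rho(\phi_0^N)$ centralises $\rho(I_K)$. Since $\rho(\phi_0^N)$ also commutes with $\rho(\phi_0)$, it lies in the centre of $\rho(W_K)$. Applying Schur's lemma to the irreducible representation $\rho$ then yields a scalar $c\in\CC^\times$ with $\rho(\phi_0^N)=c\cdot\Id_V$. This is the crucial step, and the main (mild) obstacle is making sure the Schur-type argument is set up correctly: one must verify that $\rho(\phi_0^N)$ commutes with a set of generators of $\rho(W_K)$, which is where the normality of $I_K$ and the finiteness of $\rho(I_K)$ combine.

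With $c$ in hand, I would pick an $N$-th root $d\in\CC^\times$ of $c$ and define an unramified character $\chi\colon W_K\to\CC^\times$ by declaring $\chi|_{I_K}=1$ and $\chi(\phi_0)=d$; since $W_K/I_K$ is free abelian on the image of $\phi_0$, this extends uniquely to a continuous homomorphism. Setting $\rho_0:=\rho\otimes\chi^{-1}$, one has $\rho_0|_{I_K}=\rho|_{I_K}$, which is finite, and $\rho_0(\phi_0)^N=c^{-1}\rho(\phi_0^N)=\Id_V$.

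Finally, to conclude that $\rho_0$ is an Artin representation, I would argue that $\rho_0(W_K)$ is itself finite: it contains the finite normal subgroup $\rho_0(I_K)$, and the quotient $\rho_0(W_K)/\rho_0(I_K)$ is generated by the image of $\phi_0$, which has order dividing $N$. A continuous homomorphism $W_K\to\GL(V)$ with finite image factors through the quotient $W_K/\ker(\rho_0)$, which corresponds to a finite Galois extension $L/K$, so $\rho_0$ is an Artin representation, as required.
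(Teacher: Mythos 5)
The paper does not write out a proof of this proposition; it simply refers the reader to \cite{BH}, Section 28.6. Your argument is the standard one found there and is correct: continuity of $\rho$ with discrete target makes $\rho(I_K)$ finite; normality of $I_K$ lets $\rho(\phi_0)$ act by conjugation on this finite group, so some power $\rho(\phi_0^N)$ centralises $\rho(I_K)$ and hence is central in $\rho(W_K)$; Schur's lemma (using irreducibility) makes $\rho(\phi_0^N)$ a scalar $c$; and twisting by the unramified character sending $\phi_0$ to an $N$-th root of $c$ yields a representation with finite image. (Your opening remark is also right: with this paper's Definition, $V$ is finite-dimensional by fiat, so the first clause of the Proposition is vacuous; in \cite{BH} it has content because smooth representations there may a priori be infinite-dimensional.) The only step I would flesh out slightly is the very last one: the claim that a Weil representation $\rho_0$ with finite image is an Artin representation is equivalent to $\rho_0$ extending continuously to $\Gamma_K$, i.e.\ to $\ker\rho_0$ being open in the subspace topology $W_K$ inherits from $\Gamma_K$, which is strictly coarser than the Weil topology. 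This is standard (\cite{Tate}, \cite{BH}), and the two facts you record --- $\rho_0(I_K)$ finite and $\rho_0(\phi_0)$ of finite order --- are exactly what is needed to construct the extension, but as written it reads as if it were automatic, and a reader could reasonably ask for a citation or a sentence of justification there.
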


A finite-dimensional Weil representation is semisimple if and only if it is Frobenius semisimple.  Proposition \ref{prop:Weilirred} implies therefore that every Frobenius-semisimple Weil representation $(\rho, V)$ can be written as
\begin{equation}\label{eq:Weildecomp}
V=\oplus_i V_i\otimes \chi_i,
\end{equation}
where $V_i\otimes \rho_i$ is the twist of an Artin representation by a $1$-dimensional unramified representation. 
To obtain a unique decomposition in \eqref{eq:Weildecomp}, we define an equivalence relation on $1$-dimensional unramified representations over $K$ by 
\[
\chi_i\sim \chi_j \quad \Leftrightarrow \quad \chi_i\circ \chi_j^{-1} \text{ factors through a finite extension.} 
\]
This condition is equivalent to requiring that $\chi_i(\phi)$ and $\chi_j(\phi)$ differ by a root of unity for a Frobenius element $\phi$.  
 The decomposition in \eqref{eq:Weildecomp} is unique if we let the sum run over a set of representatives of this equivalence relation.

\subsection{Stable reduction} \label{subsec:stable_reduction}

Let  $Y/K$ be a smooth, projective, and absolutely irreducible curve. For simplicity, we assume that the genus $g$ of $Y$ is at least $2$.

Let $K^\nr\subset\Kb$ denote the maximal subextension unramified over $K$, and let $L/K^\nr$ be a finite extension. Let $v_L$ denote the unique extension of the valuation $v_K$ to $L$ and $\OO_L$ the ring of integers with respect to $v_L$. This is a discrete valuation ring with residue field $k$. We let $Y_L$ denote the base change of $Y$ to $L$.

By the Semistable Reduction Theorem (\cite{DeligneMumford69}) the curve $Y_L$ has semistable reduction if the extension $L/K^\nr$ is sufficiently large. This means that there exists a flat and proper $\OO_L$-scheme $\Y$ with generic fiber $Y_L$ such that the special fiber $\Y_s:=\Y\otimes_{\OO_L}k$ is reduced and has at most ordinary double points as singularities. We may further assume that $\Y$ is the {\em stable model} of $Y_L$, i.e.\ all irreducible components of $\Y_s$ of geometric genus $0$ intersects the other components in at least $3$ points, where a point of self intersection counts as $2$ points. Under this additional assumption, the model $\Y$, and hence its special fiber $\Y_s$, are uniquely determined by $Y$ and the extension $L/K$. 

If we replace $L$ by some finite extension, the special fiber $\Y_s$ of the stable model remains unchanged. This fact has the following important consequences. Firstly, the $k$-curve $\Y_s$ only depends on the original curve $Y/K$, and not on the choice of $L/K$. We call it the {\em stable reduction} of $Y/K$. Secondly, we may assume that $L/K$ is a Galois extension.

Write $G:=\Gal(L/K)$ for the Galois group of $L/K$ and $I:=\Gal(L/K^\nr)\subset G$ for its inertia subgroup. We have a short exact sequence
\[
   1 \to I \to G \to \Gamma_{\FF_K} \to 1.
\]
Here we identify the Galois group of $K^\nr/K$ with $\Gamma_{\FF_K}$, the Galois group of $k/\FF_K$. The group $G$ acts naturally on the $k$-curve $\Y_s$; this action is $k$-semilinear, meaning that the induced action on the constant base field $k$ of $\Y_s$ is the natural action corresponding to the morphism $G\to\Gamma_{\FF_K}$. In particular, the action of the inertia group $I$ on $\Y_s$ is $k$-linear. 

\begin{prop}\label{prop:Gfaithful}
  There exists a unique minimal extension $L/K^\nr$ such that $Y_L$ has semistable reduction. The extension $L/K$ is Galois, and the action of $G=\Gal(L/K)$ on $\Y_s$ is faithful. 
\end{prop}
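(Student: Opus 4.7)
The plan is to prove the three assertions of the proposition in turn, with a single descent-type lemma doing the heavy lifting for both the uniqueness and the faithfulness claims. For uniqueness of the minimal extension, given two finite extensions $L_1, L_2/K^\nr$ inside $\Kb$ over which $Y$ acquires semistable reduction, I would work inside the compositum $L_{12} = L_1 L_2$, where the stable model $\Y_{12}/\OO_{L_{12}}$ is unique by Deligne--Mumford. The two given stable models $\Y_i/\OO_{L_i}$ each base-change to $\Y_{12}$, which therefore carries two compatible Galois descent data for $\Gal(L_{12}/L_i)$; combining them produces a descent datum for $\Gal(L_{12}/(L_1 \cap L_2))$, hence a semistable model over $\OO_{L_1 \cap L_2}$. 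Intersecting over all eligible extensions then yields a unique minimal $L$. The Galois property of $L/K$ is immediate: for any $\sigma \in \Gamma_K$, applying $\sigma$ to the stable model of $Y_L$ yields a stable model of $Y_{\sigma(L)}$, and minimality forces $\sigma(L) = L$.

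For faithfulness of the $G$-action on $\Y_s$, I would split according to the image in $\Gamma_{\FF_K}$. If $\sigma \in G \setminus I$, then its image $\bar\sigma$ in $\Gamma_{\FF_K}$ is nontrivial; since the $G$-action on $\Y_s$ is $k$-semilinear with underlying action on the constant field $k$ given by the canonical surjection $G \to \Gamma_{\FF_K}$, the morphism of schemes $\sigma \colon \Y_s \to \Y_s$ is nontrivial on $k$ and thus not the identity. If $\sigma \in I \setminus \{1\}$, I would argue by contradiction: assume $\sigma$ acts trivially as a $k$-linear automorphism of $\Y_s$. Setting $H = \langle \sigma \rangle$ and $L' = L^H \subsetneq L$, the aim is to show that the stable model $\Y/\OO_L$ descends along $\OO_L/\OO_{L'}$, giving a semistable model over $\OO_{L'}$ and contradicting the minimality established above.

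The main obstacle is precisely this descent statement, which is also the engine behind the uniqueness argument: if $H \subset I$ acts trivially on the special fiber $\Y_s$, then $\Y$ is the base change of a semistable model over $\OO_{L^H}$. I would attack this by passing to formal completions along the special fiber, using rigidity of stable curves (the automorphism group of a stable curve of genus $\geq 2$ is finite) and deformation theory to force the $H$-action on the formal completion of $\Y$ to factor through the natural $H$-action on $\OO_L$; then fpqc descent yields a formal scheme over the completion of $\OO_{L^H}$, which algebraizes back to the desired model by uniqueness of the stable model. Isolating this as a standalone lemma would make clear that the entire proposition is a single consequence of the descent principle combined with the Deligne--Mumford uniqueness of the stable model.
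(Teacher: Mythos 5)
Your overall strategy is genuinely different from the paper's, and unfortunately the central descent step has a real gap. The paper does not prove that the stable model $\Y$ over $\OO_L$ \emph{descends} to $\OO_{L'}$. Instead it takes $H=\ker\bigl(G\to\Aut_{\FF_K}(\Y_s)\bigr)$, notes $H\subset I$ is finite, and forms the \emph{quotient} $\Y':=\Y/H$, which is naturally a scheme over $\OO_L^H=\OO_{L'}$. The entire technical content is then outsourced to Raynaud's theorem (\cite[Cor.~2.3.3]{Raynaud3pt}) that a quotient of a semistable curve by a finite group acting in this way is again semistable with $\Y'_s=\Y_s$. Existence, faithfulness of the $G/H$-action, and uniqueness of $L'$ all fall out at once, since $L'$ is characterized intrinsically: any other $M/K^\nr$ over which $Y$ has semistable reduction has $\Gal(\Kb/M)$ acting trivially on the special fiber, hence lies in $\ker\kappa$, forcing $M\supset L'$.

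The step your argument relies on --- that a $k$-trivial $H$-action on $\Y_s$ lets you descend $\Y$ along $\OO_L/\OO_{L'}$ --- does not go through as stated. The extension $\OO_L/\OO_{L'}$ is totally ramified, so $\Spec\OO_L\to\Spec\OO_{L'}$ is \emph{not} an $H$-torsor and $\OO_L\otimes_{\OO_{L'}}\OO_L$ is not $\prod_{h\in H}\OO_L$; it has components meeting in the special fiber. Consequently a semilinear $H$-action on $\Y$ is strictly less information than a descent datum for this fpqc cover, and ``fpqc descent'' cannot be invoked. This is not a technicality: the quotient map $\Y\to\Y/H$ is ramified along the special fiber precisely because $H$ has fixed points there, so $\Y\ne(\Y/H)\otimes_{\OO_{L'}}\OO_L$ and the model over $\OO_{L'}$ is genuinely a quotient, not a descent, of $\Y$. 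Your appeal to ``rigidity $+$ deformation theory to force the $H$-action to factor through the natural action on $\OO_L$'' does not address this; the $H$-action already covers the natural action on $\OO_L$, and the obstruction lives in the non-\'etaleness of the cover, not in the automorphisms of the formal fiber. The same gap reappears in your uniqueness argument: ``combining descent data'' for $\Gal(L_{12}/(L_1\cap L_2))$ runs into exactly the same ramification problem. The clean way through, which the paper takes, is to replace the would-be descent by the quotient construction and to cite Raynaud; your splitting of faithfulness into $\sigma\notin I$ (fine as written) and $\sigma\in I\setminus\{1\}$ then reduces to this same quotient lemma.
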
	

\begin{proof}
Let $H\subset G$ denote the kernel of the morphism
\[
    G\to\Aut_{\FF_K}(\Y_s),
\]	
and let $L':=L^H$ be its fixed field.
Clearly, $H$ is contained in $I$, and so is finite. The quotient scheme $\Y':=\Y/H$ is again semistable (\cite[Cor.~2.3.3]{Raynaud3pt}), with special fiber $\Y'_s=\Y_s$. It follows that $L'/K$ is an extension with all the required properties, such that the action of $G':={\rm Gal}(L'/K)=G/H$ on $\Y'_s$ is faithful. This proves the existence part of the proposition. The uniqueness is clear. 
\end{proof}

From now on, we assume that the extension $L/K$ is chosen as in the proposition. It follows that the action of $G$ on the stable reduction $\Y_s$ is faithful. 

\subsection{Reduction of compact type} \label{subsec:compact_type}

For the rest of this article, we make the following additional assumption. 

\begin{assumption}  \label{ass:compact_type}
  We assume that the curve $Y/K$ has potential reduction \emph{of compact type}, i.e.\ that the graph of components of the stable reduction $\Y_s$ is a tree. 
\end{assumption}

It is well known that Assumption \ref{ass:compact_type} holds if and only if the Jacobian of $Y/K$ has potential good reduction (see e.g.\ \cite{SerreTate68}).

The semistable $k$-curve $\Y_s$ is connected and has arithmetic genus $g$. We let
\[
   \psi:\Yb\to\Y_s
\]
denote the normalization of $\Y_s$. Then $\Yb$ is a smooth projective curve over $k$, not necessarily irreducible. In fact, $\Yb$ is the disjoint union of the irreducible components of $\Y_s$, which are all smooth, by Assumption \ref{ass:compact_type}.
The action of $G={\rm Gal}(L/K)$ on $\Y_s$ induces an action on $\Yb$, which is faithful as well. From now on, we consider $G$ as a subgroup of $\Aut_{\FF_K}(\Yb)$ via this action. We call the natural map
\begin{equation} \label{eq:residual_action}
  \kappa:\Gamma_K \to G\subset\Aut_{\FF_K}(\Yb)
\end{equation}
the {\em residual action}.

\vspace{2ex}
Choose a prime number $\ell\neq p$. We have linear maps 
\begin{equation} \label{eq:cospecialization}
H^i_\et(Y_{\Kb},\QQ_\ell) \longleftarrow H^i_\et(\Y_s,\QQ_\ell) \stackrel{\psi^*}{\longrightarrow}
H^i_\et(\Yb,\QQ_\ell).
\end{equation}
The left arrow is the {\em cospecialization map}. Note that the group $\Gamma_K$ acts on all three vectors spaces, and both maps are $\Gamma_K$-equivariant. Moreover, the action on the vector spaces in the middle and on the right factor through the residual action $\kappa:\Gamma_K\to G$. Using Assumption \ref{ass:compact_type}, we obtain a much stronger statement.

\begin{prop}\label{prop:compact_type}
  \begin{enumerate}[(i)]
  \item
    Both arrows in \eqref{eq:cospecialization} are isomorphisms.
  \item
    The action of $\Gamma_K$ on $H^1_\et(Y_{\Kb},\QQ_\ell)$ also factors through the quotient $\Gamma_K\to G$.    
  \end{enumerate}
\end{prop}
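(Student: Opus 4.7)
The overall strategy is to establish (i) first (which carries all the geometric content) and then derive (ii) almost formally from (i) together with observations already made in the paragraph preceding the proposition. For the left (cospecialization) arrow in \eqref{eq:cospecialization}, I would simply cite the proper base change theorem applied to the proper $\OO_L$-scheme $\Y$ with generic fibre $Y_L$: this gives isomorphisms $H^i_\et(\Y_s,\QQ_\ell) \xrightarrow{\sim} H^i_\et(Y_{\Kb},\QQ_\ell)$ for all $i$, and this step does not use Assumption~\ref{ass:compact_type}.

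The substantive claim is therefore that the pullback $\psi^*: H^i_\et(\Y_s,\QQ_\ell) \to H^i_\et(\Yb,\QQ_\ell)$ is an isomorphism in the degree $i=1$ relevant for (ii). Since $\psi$ is finite we have $H^i(\Y_s,\psi_*\QQ_\ell)=H^i(\Yb,\QQ_\ell)$, and I would combine this with the short exact sequence of \'etale sheaves on $\Y_s$
\[
0 \to \QQ_\ell \to \psi_*\QQ_\ell \to \Q \to 0,
\]
where $\Q$ is the skyscraper sheaf supported on the set $N$ of nodes of $\Y_s$ with stalk $\QQ_\ell$ at each node (each node has exactly two preimages in $\Yb$, and the quotient of $\QQ_\ell^2$ by the diagonal is $\QQ_\ell$). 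The resulting long exact cohomology sequence reduces the problem to surjectivity of the boundary map $d:H^0(\Yb,\QQ_\ell) \to H^0(\Y_s,\Q)$. After orienting each node, $d$ is identified with the simplicial coboundary $C^0(\Delta,\QQ_\ell)\to C^1(\Delta,\QQ_\ell)$ of the dual graph $\Delta$ of $\Y_s$ (vertices $=$ irreducible components of $\Yb$, edges $=$ nodes of $\Y_s$), whose cokernel is by definition $H^1(\Delta,\QQ_\ell)$. By Assumption~\ref{ass:compact_type} the graph $\Delta$ is a tree, so $H^1(\Delta,\QQ_\ell)=0$ and $\psi^*$ is an isomorphism, proving (i).

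For (ii), the $\Gamma_K$-action on the $k$-scheme $\Yb$ factors through $\kappa:\Gamma_K \to G\subset \Aut_{\FF_K}(\Yb)$ by the very definition of $\kappa$, hence the same is true for the induced action on $H^1_\et(\Yb,\QQ_\ell)$. Composing the two $\Gamma_K$-equivariant isomorphisms of (i) transports this factorization back to $H^1_\et(Y_{\Kb},\QQ_\ell)$, giving (ii). The only non-formal ingredient in the argument is the identification of $d$ with the coboundary of $\Delta$; this is the standard tool for comparing the \'etale cohomology of a nodal curve with that of its normalization, and the compact type hypothesis is used only at the last step to kill $H^1(\Delta,\QQ_\ell)$.
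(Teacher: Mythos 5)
Your treatment of the right-hand arrow $\psi^*$ is correct and is essentially the standard normalization-sequence argument: identifying the cokernel of the coboundary on $H^0$ with $H^1$ of the dual graph $\Delta$, and then using Assumption~\ref{ass:compact_type} to kill it. However, your claim that the left-hand (cospecialization) arrow is an isomorphism ``by proper base change, without using Assumption~\ref{ass:compact_type}'' is wrong, and this is a genuine gap. Proper base change identifies $H^i_\et(\Y_s,\QQ_\ell)$ with $H^i_\et(\Y,\QQ_\ell)$ (the cohomology of the total space over the strictly henselian base), but the remaining restriction map $H^i_\et(\Y,\QQ_\ell)\to H^i_\et(Y_{\Kb},\QQ_\ell)$ is \emph{not} an isomorphism in general: its failure is controlled by the vanishing cycles, and for a semistable model it is measured precisely by $H^1(\Delta,\QQ_\ell)$. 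Concretely, for an elliptic curve with multiplicative reduction one has $\dim H^1_\et(\Y_s,\QQ_\ell)=1$ while $\dim H^1_\et(Y_{\Kb},\QQ_\ell)=2$, so no such isomorphism can exist. In general $\dim H^1_\et(\Y_s,\QQ_\ell)=2g_{\Yb}+\dim H^1(\Delta,\QQ_\ell)$ and $\dim H^1_\et(Y_{\Kb},\QQ_\ell)=2g=2g_{\Yb}+2\dim H^1(\Delta,\QQ_\ell)$, and these agree only when $\Delta$ is a tree.

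The compact-type hypothesis is therefore indispensable for the left arrow as well, and this is exactly what the paper's proof addresses by appealing to the vanishing-cycles analysis in \cite{superell_preprint}: for a semistable model the sheaf $R^0\Phi$ vanishes, so the cospecialization map on $H^1$ is always injective, and under Assumption~\ref{ass:compact_type} the nodal contributions (which otherwise produce a toric part in $H^1_\et(Y_{\Kb},\QQ_\ell)$ of dimension $2\dim H^1(\Delta)$) disappear, giving surjectivity. To repair your argument you could keep your graph computation for $\psi^*$, note that the cospecialization map is injective via $R^0\Phi=0$, and then close the gap by the dimension count $2g_{\Yb}=2g$ that holds under Assumption~\ref{ass:compact_type}; but as written, the appeal to proper base change alone does not establish the claim. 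Part~(ii) is indeed immediate from~(i), as you say and as the paper notes.
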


\begin{proof}
Clearly, (ii) follows from (i). Statement (i) may be considered as a special case of \cite[Prop.~2.6]{superell_preprint}. Loc.~cit.~states that $H^1_\et(Y_{\Kb},\QQ_\ell)^{I_L}\simeq H^1(\Y_s, \QQ_\ell)$. The key point is that $I_L$ acts trivially on $H^1_\et(Y_{\Kb},\QQ_\ell)$ in the case that $Y_L$ has reduction of  compact type, since the sheaf of vanishing cycles is trivial. This is contained as an easier special case  in the arguments in \S\S2.5--2.8 of \cite{superell_preprint}. 
\end{proof}	

\bigskip
We use the notation
\[
    V_\ell := H^1_\et(Y_{\Kb},\QQ_\ell).
\]
The natural action of the Galois group $\Gamma_K$ on $V_\ell$ yields an $\ell$-adic Galois representation
\begin{equation} \label{eq:rho_ell}
   \rho_\ell:\Gamma_K\to \GL_{\QQ_\ell}(V_\ell),
\end{equation} 
which is our main object of interest. We also set
\[
   \bar{V}_\ell := H^1_\et(\Yb,\QQ_\ell),
\]
and let 
\[
   \overline{\rho}_\ell:G\to \GL_{\QQ_\ell}(\overline{V}_\ell)
\]
denote the $\ell$-adic representation of $G$.

\begin{cor} \label{cor:compact_type}
  We have a commutative diagramm
  \begin{equation} \label{eq:rho_ell_diagramm}
  \xymatrix{
      \Gamma_K \ar[r]^-{\rho_\ell} \ar[d]_{\kappa} 
              & \GL_{\QQ_\ell}(V_\ell) \ar[d]^{\cong} \\
      G \ar[r]^-{\overline{\rho}_\ell} 
              & \GL_{\QQ_\ell}(\overline{V}_\ell), 
  }
  \end{equation}
  in which the arrow on the right is a natural isomorphism.
\end{cor}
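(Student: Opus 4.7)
The plan is to derive this corollary as a direct formal consequence of Proposition \ref{prop:compact_type}, since all the geometric content is contained there; what remains is diagram-chasing.

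First I would define the right vertical isomorphism in \eqref{eq:rho_ell_diagramm}. By Proposition \ref{prop:compact_type}(i), both maps in the diagram \eqref{eq:cospecialization} (for $i=1$) are isomorphisms of $\QQ_\ell$-vector spaces. Composing the inverse of the cospecialization map with $\psi^*$ therefore yields a canonical $\QQ_\ell$-linear isomorphism
\[
   \alpha: V_\ell = H^1_\et(Y_{\Kb},\QQ_\ell) \;\xrightarrow{\;\sim\;}\; H^1_\et(\Yb,\QQ_\ell) = \bar{V}_\ell.
\]
Conjugation by $\alpha$ produces an isomorphism of groups $\GL_{\QQ_\ell}(V_\ell)\xrightarrow{\sim}\GL_{\QQ_\ell}(\bar V_\ell)$, which I take as the right vertical arrow. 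Its naturality is immediate from the construction: the map $\alpha$ is built entirely from the stable model $\Y$ and its normalization $\psi:\Yb\to\Y_s$, and the stable model is itself canonically attached to $Y/K$ (Proposition \ref{prop:Gfaithful} and the discussion of \S\ref{subsec:stable_reduction}).

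Next I would verify commutativity. As noted in the paragraph preceding Proposition \ref{prop:compact_type}, both arrows in \eqref{eq:cospecialization} are $\Gamma_K$-equivariant. Hence the isomorphism $\alpha$ intertwines the $\Gamma_K$-actions on $V_\ell$ and on $\bar V_\ell$. On $\bar V_\ell$ this action factors through the residual action $\kappa:\Gamma_K\to G\subset\Aut_{\FF_K}(\Yb)$ followed by $\bar\rho_\ell$, simply because $\Gamma_K$ acts on $\Yb$ via $\kappa$ by the very definition \eqref{eq:residual_action}. Transporting this identity back along $\alpha$ gives, for each $\gamma\in\Gamma_K$,
\[
   \alpha\circ\rho_\ell(\gamma)\circ\alpha^{-1} \;=\; \bar\rho_\ell(\kappa(\gamma)),
\]
which is precisely commutativity of \eqref{eq:rho_ell_diagramm}. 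Part (ii) of Proposition \ref{prop:compact_type} is then recovered as the statement that $\rho_\ell$ factors through $\kappa$.

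Since Proposition \ref{prop:compact_type} already does the real work (the triviality of the inertia action, via vanishing of the sheaf of vanishing cycles in the compact-type case), there is no substantive obstacle here: the only thing to be careful about is keeping track of equivariance along each of the two maps in \eqref{eq:cospecialization}, and in particular making sure that the cospecialization map is $\Gamma_K$-equivariant (not just $I_K$-equivariant), which is standard for the cospecialization morphism associated to a model over $\OO_L$ once $L/K$ is chosen Galois as in Proposition \ref{prop:Gfaithful}.
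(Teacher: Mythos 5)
Your argument is exactly the intended one: the paper leaves this corollary without a separate proof precisely because it is the formal unwinding of Proposition~\ref{prop:compact_type}, with the right vertical arrow given by conjugation along the composite isomorphism $\alpha=\psi^*\circ(\text{cosp})^{-1}$ and commutativity following from the stated $\Gamma_K$-equivariance of both maps in \eqref{eq:cospecialization}. Nothing is missing, and the approach matches the paper's.
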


\begin{rem}\label{rem:rho_ell_computation}
The diagram \eqref{eq:rho_ell_diagramm} is our main tool to study the $\ell$-adic representation $\rho_\ell$. The assumption that $Y$ has potential reduction of compact type enables us to divide the study of $\rho_\ell$ into two separate parts. 
\begin{enumerate}[(a)]
\item
  The computation of the semistable reduction of the curve $Y$, together with  the residual action $\kappa$.
\item
  The computation of the representation $\bar{\rho}_\ell$.  	
\end{enumerate}	

For superelliptic curves of exponent prime to $p$, Part (a) has been essentially solved in \cite{superell}. For Part (b), one can use simple point counting and the trace formula from Theorem~\ref{thm:trace_formula} below.
\end{rem}

\subsection{The trace formula}\label{sec:trace}

In the rest of this section we look exclusively at the $k$-curve $\Yb$ and the subgroup $G\subset \Aut_{\FF_K}(\Yb)$. Our goal is to describe the $\ell$-adic cohomology groups
\[
    H^i_\et(\Yb,\QQ_\ell), \quad i=0,1,2,
\]
as $G$-representations. The dimensions of these vector spaces are well known: we have
\begin{equation}  \label{eq:dim_Hi}
\dim_{\QQ_\ell} H^i_\et(\Yb,\QQ_\ell) = 
\begin{cases}
d, & i=0,2,\\
2g_{\Yb}, & i=1, \\
0,  & i>2.
\end{cases}   
\end{equation}
Here $d$ is the number of connected components and $g_{\Yb}$ the arithmetic genus of $\Yb$. Note that 
\[
g_{\Yb} = \sum_i g_{\Yb_i},
\]
where $\Yb_i$ are the connected components of $\Yb$.

Recall that the group $G$ sits in a short exact sequence
\begin{equation} \label{eq:G_exact_sequence}
  1 \to I \to G \stackrel{r}{\longrightarrow} \Gamma_{\FF_K} \to 1,
\end{equation}
where $I:=G\cap\Aut_k(\Yb)$ is the {\em finite} subgroup of $k$-linear automorphisms of $\Yb$ and where $r:G\to\Gamma_{\FF_K}$ is induced by the action of $G$ on the constant base field $k$ of $\Yb$. Recall also that $\Gamma_{\FF_K}$ is a procyclic group, topologically generated by the arithmetic Frobenius element $\Frob_{\FF_K}$.
 
\begin{defn}
\begin{enumerate}[(i)]
\item	
  An element of $\phi\in G$ is called an $\FF_K$-Frobenius element if $r(\phi)=\Frob_{\FF_K}$.
\item
  An $\FF_K$-model of $\Yb$ is a smooth projective curve $\Yb_0$ over $\FF_K$, together with a $k$-linear isomorphism  
  \[
     \Yb\cong \Yb_0\otimes_{\FF_K}k.
  \]
  (We will usually identify $\Yb$ with $\Yb_0\otimes_{\FF_K}k$.)
\end{enumerate}  
\end{defn}

\begin{lem} \label{lem:phi-models}
	Let  $\phi\in G$ be an $\FF_K$-Frobenius element. Then there exists an $\FF_K$-model $\Yb_\phi$ of $\Yb$, unique up to unique isomorphism, such that
	\[
    	\phi = \Id_{\Yb_\phi}\otimes \Frob_{\FF_K}.
	\]
	The association $\phi\mapsto \Yb_\phi$ defines a bijection between $\FF_K$-Frobenius elements and $\FF_K$-models of $\Yb$, up to isomorphism.
\end{lem}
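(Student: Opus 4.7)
The plan is to realize the correspondence as a statement of Galois descent along the profinite Galois extension $k/\FF_K$, whose Galois group is $\Gamma_{\FF_K}\cong\widehat{\ZZ}$, topologically generated by $\Frob_{\FF_K}$.

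For uniqueness, if $\Yb_\phi$ and $\Yb_\phi'$ are two $\FF_K$-models satisfying $\Id_{\Yb_\phi}\otimes\Frob_{\FF_K}=\phi=\Id_{\Yb_\phi'}\otimes\Frob_{\FF_K}$, then under the given identifications with $\Yb$ the identity map $\Id_{\Yb}$ intertwines the canonical Frobenius endomorphisms of the two base changes. By Galois descent along $k/\FF_K$, this intertwiner descends to a unique $\FF_K$-isomorphism $\Yb_\phi\cong\Yb_\phi'$.

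For existence, I would first observe that the closed subgroup $\overline{\langle\phi\rangle}\subset G$ topologically generated by $\phi$ is procyclic. Its image under $r$ equals all of $\Gamma_{\FF_K}$, since $r(\phi)=\Frob_{\FF_K}$ topologically generates $\Gamma_{\FF_K}$. Moreover, there are continuous surjections $\widehat{\ZZ}\twoheadrightarrow\overline{\langle\phi\rangle}\twoheadrightarrow\Gamma_{\FF_K}$ (the first sending $1\mapsto\phi$), whose composition is the identity of $\widehat{\ZZ}$; consequently both arrows are isomorphisms. Inverting the second yields a continuous section
\[
s_\phi:\Gamma_{\FF_K}\to G,\qquad s_\phi(\Frob_{\FF_K})=\phi,
\]
of $r$. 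Composing $s_\phi$ with the inclusion $G\hookrightarrow\Aut_{\FF_K}(\Yb)$ gives a continuous semilinear action of $\Gamma_{\FF_K}$ on $\Yb$, and Galois descent then produces the desired $\FF_K$-model $\Yb_\phi$, whose canonical Frobenius equals $\phi$ by construction. The bijection with $\FF_K$-models is then immediate: the inverse sends an $\FF_K$-model $\Yb_0$ (with identification $\Yb_0\otimes_{\FF_K}k\cong\Yb$) to $\Id_{\Yb_0}\otimes\Frob_{\FF_K}\in\Aut_{\FF_K}(\Yb)$, which lies in $G$ because $\Yb_0$ corresponds via Galois descent to a section of $r$ whose value at $\Frob_{\FF_K}$ is precisely this element.

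The main technical obstacle is the effectiveness of Galois descent for the infinite extension $k/\FF_K$. Since $\Yb$ is projective, it is already defined over some finite subextension $\FF_{q^n}\subset k$; after possibly enlarging $n$ one argues that the continuous action induced by $s_\phi$, restricted to $\Gal(k/\FF_{q^n})$, coincides with the canonical action on a suitable $\FF_{q^n}$-model $\Yb_1$ of $\Yb$, so that the problem reduces to finite Galois descent of $\Yb_1$ from $\FF_{q^n}$ down to $\FF_K$ via the element $\phi$. Producing $\Yb_1$ such that $\phi^n$ is the standard $n$-th Frobenius of $\Yb_1$ — essentially a Lang-type adjustment — is the delicate point and is what makes real use of the hypothesis that $\phi$ comes from the profinite group $G$ rather than being an abstract semilinear automorphism.
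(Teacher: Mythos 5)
Your proposal takes essentially the same route as the paper: construct the unique continuous section $s_\phi:\Gamma_{\FF_K}\to G$ through $\phi$ (using that $\Gamma_{\FF_K}$ is free procyclic on $\Frob_{\FF_K}$), interpret it as a Weil cocycle for the extension $k/\FF_K$, and invoke Weil descent of the field of definition. The paper is terser, simply citing Weil's theorem and the references of Serre and Huggins for effectivity, whereas you elaborate on why the descent datum satisfies the required continuity/finiteness hypothesis; your framing of that step as a ``Lang-type adjustment'' is nonstandard and not really needed (for a quasi-projective variety of finite type a procyclic descent datum is automatically continuous once one fixes a model over a finite subfield and notes that $\phi^{n}$ differs from the canonical Frobenius by a $k$-linear automorphism of finite order), but the overall plan agrees with the paper's.
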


\begin{proof}
Since $G$ is a profinite group and $\Gamma_{\FF_K}$ is pro-cyclic with topological generator $\Frob_{\FF_K}$, there exists a unique section $s:\Gamma_{\FF_K}\to G$ of $r$ such that $s(\Frob_{\FF_K})=\phi$. 

An element $\phi_\gamma\in G\subset \Aut_{\FF_K}(\Yb)$ lifting an element $\gamma\in\Gamma_{\FF_K}$ can be identified with a $k$-linear isomorphism
\[
\phi_\gamma:\prescript{\gamma}{}{\Yb} \stackrel{\sim}{\longrightarrow} \Yb,
\]
where 
\[
\prescript{\gamma}{}{\Yb} := \Yb\times_{\Spec(\gamma)}\Spec(k)
\]
is the base change of $\Yb$ along the isomorphism $\Spec(\gamma):\Spec(k)\stackrel{\sim}{\to}\Spec(k)$. 
Using this identification, the section $s$ defines a Weil cocycle $(\phi_\gamma)_{\gamma\in\Gamma_{\FF_q}}$ for the $k$-variety $\Yb$, via $\phi_\gamma:=s(\gamma)$. We see that the statement of the lemma is just a reformulation of Weil descent (\cite{WeilDescent}, see also \cite[Ch.~V.20]{SerreGACC} and \cite[Chapter 1]{Huggins_thesis}). 	
\end{proof}

\begin{rem}
	In \S\ref{sec:DDmethod} we apply Lemma \ref{lem:phi-models} also to  Frobenius elements corresponding to finite extensions of $\FF_K$.
\end{rem}

\begin{thm} \label{thm:trace_formula}
	Let $\phi\in G$ be an $\FF_K$-Frobenius element, and let $\Yb_\phi$ be the corresponding $\FF_K$-model of $\Yb$. Then
	\[
    	\sum_{i=0}^2 (-1)^i \Tr(\phi^{-1}, H^i_\et(\Yb,\QQ_\ell))
	          = \abs{\Yb_\phi(\FF_K)}.
	\]	
\end{thm}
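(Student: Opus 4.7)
The plan is to reduce the statement to the Grothendieck--Lefschetz trace formula applied to the $\FF_K$-scheme $\Yb_\phi$. The whole point of Lemma~\ref{lem:phi-models} is to package the action of $\phi$ on $\Yb$ as pure Galois descent, so that trace formula arguments designed for varieties over finite fields can be applied directly.

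First I would use the $\FF_K$-model structure to identify
\[
H^i_\et(\Yb,\QQ_\ell) \;=\; H^i_\et\bigl(\Yb_\phi\otimes_{\FF_K}k,\QQ_\ell\bigr),
\]
and observe that under this identification the action of $\phi$ on the left corresponds, by construction of $\Yb_\phi$, to the action of $\Id_{\Yb_\phi}\otimes\Frob_{\FF_K}$ on the right. In other words, $\phi$ acts on étale cohomology through the Galois action of the \emph{arithmetic} Frobenius of $\FF_K$ on $H^i_\et(\Yb_\phi\otimes k,\QQ_\ell)$.

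Next I would invoke the Grothendieck--Lefschetz trace formula in its standard form for a smooth projective curve $\Yb_\phi/\FF_K$:
\[
\abs{\Yb_\phi(\FF_K)} \;=\; \sum_{i=0}^{2}(-1)^i \Tr\bigl(F^*\mid H^i_\et(\Yb_\phi\otimes k,\QQ_\ell)\bigr),
\]
where $F$ denotes the geometric Frobenius. Since geometric and arithmetic Frobenius are inverse to one another as elements of $\Gamma_{\FF_K}$, the operator $F^*$ on cohomology equals the inverse of the operator induced by $\Frob_{\FF_K}$, which by the previous paragraph is $\phi^{-1}$. Substituting gives exactly the desired formula.

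The only delicate point is the bookkeeping of the arithmetic-versus-geometric Frobenius convention: the definition of an $\FF_K$-Frobenius element in this paper (via $r(\phi)=\Frob_{\FF_K}$, where $\Frob_{\FF_K}$ is the $q$-th power map on $k$) fixes $\phi$ as an arithmetic Frobenius, and this is precisely what forces the inverse to appear on the cohomological side of the trace formula. Beyond this sign/inverse conventions check, the proof is formal once Lemma~\ref{lem:phi-models} is in hand.
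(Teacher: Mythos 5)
Your proposal is correct and follows essentially the same route as the paper: both reduce to the Grothendieck--Lefschetz trace formula for $\Yb_\phi/\FF_K$ and the whole content is the bookkeeping that $\phi^{-1}$ acts on $H^i_\et(\Yb,\QQ_\ell)$ as the geometric (relative) Frobenius. The only difference is that where you invoke ``geometric and arithmetic Frobenius are inverse to one another'' as a known fact, the paper spells this out by factoring the absolute $q$-Frobenius as $F^{\rm abs}_{\Yb,q}=\phi\circ F^{\rm rel}_{\Yb,q}$ and using that $F^{\rm abs}_{\Yb,q}$ acts trivially on \'etale cohomology, so $F^{\rm rel}_{\Yb,q}$ acts as $\phi^{-1}$ --- precisely the ``delicate point'' you flag at the end.
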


\begin{proof} (See also \cite[pp. 25-26]{KatzReview})
Let $F_{\Yb,q}^{\rm abs}:\Yb\to\Yb$ be the absolute $q$-Frobenius endomorphism (defined for any scheme of characteristic $p$). By the definition of the $\FF_q$-model $\Yb_\phi$, we have a factorization 
\begin{equation} \label{eq:trace_formula1}
	F_{\Yb,q}^{\rm abs} = \phi\circ F_{\Yb,q}^{\rm rel} = F_{\Yb,q}^{\rm rel}\circ\phi,
\end{equation}
where $F_{\Yb,q}^{\rm rel}:\Yb\to\Yb$ is the relative Frobenius endomorphism induced by the identification
\[
	\Yb = \Yb_\phi\otimes_{\FF_q} k.
\]
By \cite[Proposition 29.10]{MilneLEC}, the endomorphism $F_{\Yb,q}^{\rm abs}$ acts on $H^i_\et(\Yb,\QQ_\ell)$ as the identity. Therefore, the factorization \eqref{eq:trace_formula1} shows that
\begin{equation} \label{eq:trace_formula2}
	F^{\rm rel}_{\Yb,q}|_{H^i(\Yb,\QQ_{\ell})} = \phi^{-1}|_{H^i(\Yb,\QQ_{\ell})}.
\end{equation}
In view of \eqref{eq:trace_formula2} the statement of the theorem is now the standard trace formula of Grothendieck, see \cite[Proposition 27.3]{MilneLEC}.
\end{proof}

\begin{rem}
	With the notation of Theorem \ref{thm:trace_formula}: let $d$ denote the number of absolutely irreducible components of $\Yb_\phi$. Then 
	\[
	\Tr(\phi^{-1},H^i_\et(\Yb,\QQ_\ell)) = 
	\begin{cases}
	d, & i=0, \\
	dq, & i=2. 
	\end{cases}
	\]	
	To prove this, use that the absolutely irreducible components of $\Yb_\phi$ correspond to the irreducible components of $\Yb$ which are fixed by $\phi$. 
\end{rem}

\begin{cor} \label{cor:trace_formula}
	For an $\FF_K$-Frobenius element $\phi$ we have
	\[
	\Tr(\phi^{-1},\overline{V}_\ell) = d(q + 1) - \abs{\Yb_\phi(\FF_K)}.
	\]	
\end{cor}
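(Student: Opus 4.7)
The plan is to deduce the formula directly from Theorem~\ref{thm:trace_formula} by evaluating the contributions of $H^0_\et(\Yb,\QQ_\ell)$ and $H^2_\et(\Yb,\QQ_\ell)$ explicitly and solving for the $H^1$-trace.

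First I would rewrite the alternating sum appearing in Theorem~\ref{thm:trace_formula}, using $\overline{V}_\ell = H^1_\et(\Yb,\QQ_\ell)$, as
\[
\Tr(\phi^{-1}, H^0_\et(\Yb,\QQ_\ell)) - \Tr(\phi^{-1}, \overline{V}_\ell) + \Tr(\phi^{-1}, H^2_\et(\Yb,\QQ_\ell)) = \abs{\Yb_\phi(\FF_K)}.
\]
Solving for the middle term yields
\[
\Tr(\phi^{-1}, \overline{V}_\ell) = \Tr(\phi^{-1}, H^0_\et(\Yb,\QQ_\ell)) + \Tr(\phi^{-1}, H^2_\et(\Yb,\QQ_\ell)) - \abs{\Yb_\phi(\FF_K)}.
\]

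Next, I would invoke the remark immediately preceding the corollary, which asserts that $\Tr(\phi^{-1}, H^0_\et(\Yb,\QQ_\ell)) = d$ and $\Tr(\phi^{-1}, H^2_\et(\Yb,\QQ_\ell)) = dq$, where $d$ is the number of absolutely irreducible components of $\Yb_\phi$. The underlying point is that $\phi^{-1}$ permutes the connected components of $\Yb$, the $\phi$-fixed components correspond bijectively to the absolutely irreducible components of the $\FF_K$-model $\Yb_\phi$, and on $H^2$ one picks up an additional factor of $q$ coming from the Tate twist on the top cohomology of each fixed component. Substituting both values into the displayed formula gives $\Tr(\phi^{-1}, \overline{V}_\ell) = d + dq - \abs{\Yb_\phi(\FF_K)} = d(q+1) - \abs{\Yb_\phi(\FF_K)}$, as desired.

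No real obstacle arises: the corollary is a direct bookkeeping consequence of the Grothendieck--Lefschetz trace formula proved in Theorem~\ref{thm:trace_formula}, together with the explicit description of the Frobenius action on $H^0$ and $H^2$. Everything substantive has already been done; the only input beyond Theorem~\ref{thm:trace_formula} is the identification of $d$ via the orbit structure of $\phi$ on the set of connected components of $\Yb$, which is spelled out in the preceding remark.
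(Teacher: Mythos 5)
Your argument is exactly the intended one: the corollary is an immediate rearrangement of the trace formula of Theorem~\ref{thm:trace_formula} once one substitutes the $H^0$- and $H^2$-traces from the preceding remark, and the paper gives no separate proof precisely because it is this bookkeeping. The identification of $d$ via the $\phi$-fixed components and the factor $q$ on top cohomology is also the justification the remark indicates, so nothing is missing.
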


\subsection{Local polynomials}

Using the trace formula from Corollary \ref{cor:trace_formula} we can compute the traces of all $\FF_K$-Frobenius elements $\phi$ of $G$ on $\overline{V}_\ell$. This is not quite enough to determine the representation $\overline{\rho}_\ell$; we also need the traces of the powers $\phi^m$, $m\geq 1$.  The obvious solution is to pass to the (unique) unramfied extension $K_m/K$ of degree $m$.

It is convenient to change and expand our notation a bit. We write $\FF_q$ instead of $\FF_K$, where $q$ is the cardinality of the field $K$, and similarly $\FF_{q^m}$ for the residue field of $K_m$, which is a field with $q^m$ elements. We also write $\Frob_{q^m}$ for the generator of $\Gamma_{\FF_{q^m}}={\rm Gal}(k/\FF_{q^m})$ given by $\alpha\mapsto\alpha^{q^m}$. An element $\phi\in G$ with $r(\phi)=\Frob_{q^m}$ is called a {\em $q^m$-Frobenius element}.

\begin{defn}
	Let $\phi$ be a $q$-Frobenius element, with $\FF_q$-model $\Yb_\phi$. Then  
	\[
	P(\phi^{-1}, T) := \det\left(1-T\phi|_{\overline{V}_\ell}\right)
	\]	
	is called the {\em local polynomial} of the model $\Yb_\phi$.
\end{defn}	

\begin{prop} \label{prop:local_polynomial}
\begin{enumerate}[(i)]
\item
  The polynomial $P(\phi^{-1},T)$ has integral coefficients and is of the form
  \[
		P(\phi^{-1},T) = 1 + \ldots + q^{g}T^{2g}
		\in\ZZ[T].
  \] 
\item
  Over the complex numbers the polynomial $P(\phi^{-1},T)$ decomposes as
  \[
		P(\phi^{-1},T) = \prod_{i=1}^{2g}(1-\alpha_iT),
  \]
  where the $\alpha_i$ are complex numbers with absolute value $\abs{\alpha_i}=q^{1/2}$.
\item
  For all $m\geq 1$ we have
  \[
     \sum_{i=1}^{2g} \alpha_i^m = \Tr(\phi^{-m}, \overline{V}_\ell). 
  \]
\end{enumerate}	
\end{prop}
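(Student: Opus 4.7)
By the identity \eqref{eq:trace_formula2} established in the proof of Theorem \ref{thm:trace_formula}, the operator $\phi^{-1}$ acts on $\overline{V}_\ell = H^1_\et(\Yb, \QQ_\ell)$ as the relative Frobenius $F^{\rm rel}_{\Yb,q}$ arising from the descent $\Yb = \Yb_\phi \otimes_{\FF_q} k$. Hence the numbers $\alpha_i$ appearing in the statement, characterized by $P(\phi^{-1}, T) = \prod_i (1 - \alpha_i T)$, are precisely the eigenvalues of $F^{\rm rel}$ on $H^1_\et(\Yb, \QQ_\ell)$, and $P(\phi^{-1}, T)$ coincides with the numerator
\[
  P_1(T) = \det\bigl(1 - T\, F^{\rm rel} \bigm| H^1_\et(\Yb_\phi \otimes k, \QQ_\ell)\bigr)
\]
in the Grothendieck--Lefschetz decomposition of the zeta function of the smooth projective $\FF_q$-curve $\Yb_\phi$. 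All three claims then reduce to classical properties of this $L$-polynomial.

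For (i): by standard $\ell$-adic cohomology theory, $P_1(T) \in \ZZ[T]$ with $\ell$-independent coefficients, and $P_1(0) = 1$. The degree equals $\dim_{\QQ_\ell} \overline{V}_\ell = 2 g_{\Yb}$, and under Assumption \ref{ass:compact_type} the dual graph of $\Y_s$ is a tree, so $g_{\Yb} = g$. The leading coefficient is $q^g$ by the functional equation $P_1(T) = q^g T^{2g} P_1(1/(qT))$, a consequence of Poincar\'e duality applied to each connected component.

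Part (ii) is Weil's Riemann hypothesis for curves over finite fields. A minor subtlety is that $\Yb_\phi$ need not be connected, let alone geometrically connected; it is, however, a disjoint union of smooth projective $\FF_q$-curves $C_j$, and each $C_j$ is the Weil restriction of a geometrically connected smooth projective curve $\tilde C_j$ defined over some extension $\FF_{q^{e_j}}$. The classical Weil bound applied to $\tilde C_j / \FF_{q^{e_j}}$ gives Frobenius eigenvalues of absolute value $(q^{e_j})^{1/2}$, and the eigenvalues of $F^{\rm rel}_{\Yb,q}$ on the corresponding summand of $H^1_\et(\Yb, \QQ_\ell)$ are their $e_j$-th roots, hence of absolute value $q^{1/2}$. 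Part (iii) is then a formal consequence: $\phi^{-m}$ has eigenvalues $\alpha_i^m$, whose sum is $\Tr(\phi^{-m}, \overline{V}_\ell)$. I expect the bookkeeping with non-(geometrically-)connected components in (ii) to be the only step requiring genuine attention; the remaining content is a direct appeal to the classical theory of zeta functions of smooth projective curves over finite fields.
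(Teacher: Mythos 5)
Your proof is correct and takes essentially the same route as the paper, which simply cites the Weil conjectures for curves over finite fields (Katz's review, and Weil's original theorem in the case that $\Yb_\phi$ is absolutely irreducible); you unpack that citation, which is a legitimate and more self-contained way to present the argument. One terminological slip: a connected but not geometrically connected smooth projective $\FF_q$-curve $C_j$ with field of constants $\FF_{q^{e_j}}$ is \emph{not} the Weil restriction $\Res_{\FF_{q^{e_j}}/\FF_q}\tilde C_j$ of a geometrically connected curve $\tilde C_j/\FF_{q^{e_j}}$ --- Weil restriction multiplies the dimension by $e_j$ and would produce a surface, not a curve. What you mean is that $C_j$ \emph{equals} $\tilde C_j$ viewed as an $\FF_q$-scheme via the composite $\tilde C_j\to\Spec\FF_{q^{e_j}}\to\Spec\FF_q$; with that correction, your eigenvalue bookkeeping (the $q$-Frobenius eigenvalues on the summand of $H^1_\et(\Yb,\QQ_\ell)$ corresponding to the orbit of $e_j$ geometric components are $e_j$-th roots of the $\FF_{q^{e_j}}$-Frobenius eigenvalues of $\tilde C_j$, hence of absolute value $q^{1/2}$) is exactly right.
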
	

\begin{proof}
	This is a very special case of the Weil Conjectures, see \cite[pp. 26-27]{KatzReview}. (If $\Yb$ is irreducible and hence $\Yb_\phi$ absolutely irreducible, this is a famous theorem of Weil, \cite{WeilCourbes}). 
\end{proof}	

\begin{rem}
\begin{enumerate}[(i)]
\item
  Using Proposition \ref{prop:local_polynomial} (iii) and Corollary \ref{cor:trace_formula} we can compute $P(\phi^{-1},T)$ explicitly by counting the points of $\Yb_\phi$ over the fields $\FF_{q^m}$, for $m=1,\ldots,g$. 
\item
  It follows that $P(\phi^{-1},T)$ does not depend on the choice of the auxiliary prime number $\ell$.
\end{enumerate}		
\end{rem}

As a side product of our investigation, we obtain a self-contained proof of the following well known theorem. We include it here as we could not find a good reference. 

\begin{thm}\label{thm:compact_type} Let $Y/K$ be a curve with potential reduction of  compact type.
\begin{enumerate}[(i)]
\item
  There exists a unique Weil representation 
  \[
     \rho: W_K\to\GL(V)
  \]
  which induces the $\ell$-adic representation $\rho_\ell$, for all $\ell\neq p$.
\item
  The representation $\rho$ is semisimple.
\end{enumerate}
\end{thm}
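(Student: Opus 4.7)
The plan is to use Corollary~\ref{cor:compact_type} to reduce the construction of $\rho$ to realizing, over $\CC$, the action of the discrete subgroup $W_G := r^{-1}(Z) \subset G$ on $\overline{V}_\ell$. Fixing a Frobenius element $\phi_0 \in G$ gives a splitting $W_G = I \rtimes \langle \phi_0 \rangle$ with $I$ finite, so every element of $W_G$ has the form $i\phi_0^n$ with $i \in I$ and $n \in \ZZ$.

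The first key step is to establish that $\overline{\rho}_\ell$ is semisimple and that its trace character $\chi_\ell$ takes algebraic values independent of $\ell$. Elements of the finite subgroup $I$ act semisimply by Maschke, and their traces on $\overline{V}_\ell$ can be computed by the Lefschetz trace formula applied to the $k$-linear automorphism of $\Yb$; since the right-hand side depends only on the action of $I$ on the $k$-curve $\Yb$, these traces are algebraic and $\ell$-independent. For Frobenius elements $g = i\phi_0^n$ with $n \geq 1$, Proposition~\ref{prop:local_polynomial} gives $\det(1-T\overline{\rho}_\ell(g)) \in \ZZ[T]$, and Frobenius-semisimplicity follows from Tate's theorem: Assumption~\ref{ass:compact_type} is equivalent to the Jacobian $J_Y$ having potential good reduction, so $\overline{V}_\ell$ is (the dual of) the rational $\ell$-adic Tate module of an abelian variety over a finite unramified extension of $\FF_K$, on which Tate's theorem gives semisimplicity. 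The case $n < 0$ reduces to $g^{-1}$ via inversion of eigenvalues, yielding a rational function of the ($\ell$-independent) coefficients of the characteristic polynomial of $g^{-1}$.

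To construct $\rho$, fix an embedding $\iota : \overline{\QQ}_\ell \hookrightarrow \CC$, set $V := \overline{V}_\ell \otimes_{\QQ_\ell, \iota} \CC$, and let $\rho := \overline{\rho}_\ell \circ \kappa$ extended $\CC$-linearly. Continuity is automatic, since $\ker(\kappa|_{I_K})$ is open in $I_K$, and by construction $\rho$ induces $\rho_\ell$ after base change along $\iota$. The character of $\rho$ is $\iota \circ \chi_\ell$; by Brauer--Nesbitt, the semisimple complex representation $\rho$ is unique up to isomorphism and its isomorphism class is independent of both $\iota$ and $\ell$, since the algebraic, $\ell$-independent values of $\chi_\ell$ pull back compatibly along any two such embeddings. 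This proves (i), and (ii) is the semisimplicity already shown. The principal obstacle is Tate's semisimplicity theorem for abelian varieties over finite fields, which we use as a black box; the remainder is bookkeeping combining the Lefschetz trace formula, Proposition~\ref{prop:local_polynomial}, the representation theory of the finite group $I$, and the Brauer--Nesbitt theorem.
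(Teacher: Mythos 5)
Your proposal follows the same route as the paper's (very terse) proof: both reduce via Corollary~\ref{cor:compact_type} to the action of $W_G = r^{-1}(Z)\subset G$ on $\overline{V}_\ell$, and both extract $\ell$-independence from the trace formula and local polynomials. What the paper leaves implicit or defers to the cited Rohrlich reference, you supply explicitly — in particular Tate's semisimplicity theorem for abelian varieties over finite fields to get Frobenius-semisimplicity, and Brauer--Nesbitt for uniqueness — and these are exactly the right ingredients. One step that deserves a little more care is the final $\iota$-independence claim: algebraic, $\ell$-independent trace values do \emph{not} automatically ``pull back compatibly'' along two different embeddings $\overline{\QQ}_\ell\hookrightarrow\CC$, since an embedding may move an irrational algebraic number to a Galois conjugate. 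The fix is to observe that on Frobenius elements $i\phi_0^n$ with $n\geq 1$ the traces are genuine rational integers (point counting via Corollary~\ref{cor:trace_formula}), and that a Frobenius-semisimple Weil representation is determined by its local polynomials on Frobenius elements (this is the Dokchitser--Dokchitser result the paper relies on throughout); together these give the required independence from both $\iota$ and $\ell$, and then Brauer--Nesbitt closes the argument.
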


\begin{proof}
The statements follow essentially from Theorems \ref{thm:trace_formula} and \ref{cor:compact_type}. Namely, by Corollary \ref{cor:compact_type} the $\ell$-adic representation is determined by $\rho_\ell$.  
The trace formula in Theorem \ref{thm:trace_formula} shows that the traces of the Frobenius elements in $\Aut_{\FF_K}(\Yb)$ acting on $\bar{V}_\ell$ are Weil numbers,  which are independent of $\ell$.
See also   \cite{Rohrlich}, proposition in Section 4, in the case of potential good reduction.
\end{proof}

\begin{rem}
  The analogous statement to Theorem \ref{thm:compact_type} in the case that $Y$ does not have reduction of compact type is also well known. The main complication is that one has to use {\em Weil--Deligne representations} instead of Weil representation. In a future article, we will discuss the computation of the Weil--Deligne representation of a curve $Y$, extending the approach of the current paper.  
\end{rem}

\subsection{Computing the Weil representation using the residual action}\label{sec:DDmethod}
We briefly  describe how to compute the Weil representation of a curve with potential reduction of compact type using only the residual action \eqref{eq:G_exact_sequence}. Our approach is a variant of the method of \cite{DD}. In \S\ref{sec:exa} we illustrate the method in three concrete cases.

Let $Y/K$ be a curve with potential  reduction of compact type. We assume that we are given a Galois extension $L/K^{\nr}$ over which $Y$ has stable reduction, together with a semistable  model $\Y$ over $\mathcal{O}_L$. We may assume that $L/K^{\nr}$ is minimal with this property. Proposition \ref{prop:Gfaithful} implies that $I=\Gal(L/K^{\nr})$ acts faithfully in the special fiber $\Y_s$. Write $\Yb$ for the normalization of $\Y_s$.

Let $q=|\FF_K|$. Lemma \ref{lem:phi-models} implies that there exists an $\FF_q$-model $\Yb_0$ of $\Yb$. Let $\phi_0$ be the $q$-Frobenius element corresponding to $\Yb_0$ under the correspondence from Lemma \ref{lem:phi-models}. 
 We may also  identify $I$ with a subgroup of $\Aut_k(\Yb)$.  
 We compute the local polynomial $P(\phi_0^{-1}, T)$ by counting points on $\Yb_0$. 
 The decomposition of the Weil representation $\rho$ of $Y$ in \eqref{eq:Weildecomp} is obtained by ordering the roots of the local polynomial $P(\phi_0^{-1}, T)$ in equivalence classes, where we consider two roots to be equivalent if their quotient is a root of unity.
 More concretely, every root $\gamma$ of the local polynomial defines an unramified character 
 \[
 \chi:W_K\to \CC^\times, \qquad \Frob_K\mapsto \gamma, 
 \]
 where $\Frob_K$ is a Frobenius element with $\kappa(\Frob_K)=\phi_0$.  In \cite{DD} it is explained how to reduce the calculation of the Weil representation to the case that $\rho=\rho_0\otimes \chi$, where $\rho_0$ is an Artin representation and $\chi$ is an unramified character, i.e.~to one of the factors in the decomposition \eqref{eq:Weildecomp}. This is  Step 4 in Section 2 of \cite{DD}. We refer to that paper or Section 3.4 of \cite{Do} for more details.

 We may restrict to the case that $\rho=\rho_0\otimes\chi$, where $\rho_0$ is an Artin representation and $\chi$ an unramified $1$-dimensional representation.
The Artin representation factors through a finite quotient 
\begin{equation}\label{eq:Gbar}
1\to I\to \overline{G}\to \langle \overline{\phi}_0\rangle\to 1
\end{equation}
of the group $G$ in \eqref{eq:G_exact_sequence}.

Let $\phi\in G$  be an $q^f$-Frobenius element. Write $\Yb_\phi$ for the corresponding model of $\Yb$ over $\FF_{q^f}$. The trace of $\rho_0(\phi)$ can be computed using Corollary \ref{cor:trace_formula}. If the representation $\rho_0$ is not rational, one has to be a bit careful with identifying elements in $K$ with complex numbers. 

For elements $g\in I$ that are not Frobenius elements, one uses that $\rho_0(\phi_0)$ has finite order $f$. Now $\phi_0^fg$ is a $q^f$-Frobenius element and the trace of $\rho_0(g)$ is the same as that of $\rho_0(\phi_0^fg)$, and can hence be computed. 

The advantage of this method is that once we are given one concrete model of $Y_L$ of compact type over a concretely given extension $L/K$, all other calculations can be done purely in positive characteristic. In \S\ref{sec:superred} we discuss how all steps can be made explicit in the case of superelliptic curve with potential reduction of compact type to characteristic $p$ in the case that $p$ does not divide the exponent of the superelliptic curve.

\section{Superelliptic curves} \label{sec:superell}

In this section, we introduce the class of curves we treat in this paper, namely  superelliptic curves. 

\subsection{Stable reduction of superelliptic curves}\label{sec:superred}

We recall some results from \cite{superell} on the  reduction of superelliptic curves. Let $K/\QQ_p$ be a finite extension.  A \emph{superelliptic curve} $Y/K$ is a smooth projective curve birationally determined by an equation of the form
\[
y^n = f(x),
\]
 where $f\in K[x]$ is a nonconstant polynomial. It is no restriction to assume that $f\in \OO_K[x]$.  We assume that $f$
 has no nontrivial factor that is an $n$th power in $K[x]$.
 Then the morphism
 \begin{equation}\label{eq:phidef}
 \pi:Y\to X:=\PP^1_K, \qquad (x,y)\mapsto x
 \end{equation}
 is branched at all zeros of $f$, and possibly at $\infty$.  

Let $L_0/K^{\nr}$ be the splitting field of $f$ and $S\subset L_0$ the set of roots of
$f$. Then we can write
\[
  f=c\prod_{\alpha\in S}(x-\alpha)^{a_\alpha},
\]
with $c\in K^\times$ and $a_\alpha\in\NN$. We impose the
following conditions on $Y$.

\begin{assumption} \label{fnass} 
\begin{itemize}
\item[(a)]
  We have $\gcd(n,a_\alpha\mid \alpha\in S)=1$.  
\item[(b)]  
  The genus $g=g(Y)$ of $Y$ is $\geq 2$.
\item[(c)]
  The exponent $n$ is $\geq 2$ and prime to $p$.
\end{itemize}
\end{assumption}

Part (a) of this assumption implies that $Y$ is absolutely irreducible. Using the Riemann-Hurwitz formula, it is then easy to compute the genus $g$ in terms of $n$ and $(a_\alpha)_{\alpha\in S}$.
Part (c) is the crucial assumption needed for the method of \cite{superell} to compute the semistable reduction.

%The following result is \cite{superell}, Corollary 4.6.  

Note that the valued field $L_0$ is henselian, the residue field $k$ of $L_0$ is algebraically closed of characteristic $p$, and $n$ is prime to $p$ by Assumption \ref{fnass} (c). It follows that there exists a unique extension $L/L_0$ of degree $n$, which is totally (and tamely) ramified.

\begin{prop}\label{prop:red}
Let $Y/K$ be a superelliptic curve satisfying Assumption \ref{fnass}. 
\begin{itemize}
\item[(a)] 
   The curve $Y$ has semistable reduction over $L$.
\item[(b)]
  Let $\Y$ be the stable model of $Y_L$ and $\Yb$ the normalization of its special fiber $\Y_s$. 
  
  The  curve $\Yb$ is the disjoint union of (not necessarily connected) superelliptic curves
  \[
    \Yb_i:\; {y_i}^{n}=\overline{f}_i({x_i}), \qquad \overline{f}_i\in k[{x_i}]
  \]
  over $k$. The map $\pi:Y\to X$ induces the maps
  \[
    \overline{\pi}_i: \Yb_i\to \Xb_i:=\PP^1_k, \quad ({x_i}, {y_i})\mapsto x_i.
  \]
For each $i$, the coordinate $x_i$ may be written as $x=A_ix_i$ for some $A_i\in \GL_2(K)$.
%\stefan{It is now very confusing to give the coordinates on $X$ and $\Xb$ the same name. This confusion would be resolved if we make the coordinate change $x=\beta x_1+\alpha$ explicit.} \irene{I have just added indices on the coordinates to make this clearer}
\end{itemize}
\end{prop}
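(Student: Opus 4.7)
The strategy is to build a semistable model of $Y_L$ from a semistable model of the base $\PP^1$ by taking the normalization in the function field of $Y_L$. The crucial hypothesis throughout is $p\nmid n$ from Assumption~\ref{fnass}~(c), which ensures that $\pi:Y\to X$ is tame and places the construction in the setting of Kummer theory. Part~(b) then follows by computing this normalization explicitly on each component of the special fiber.

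For part~(a), let $D\subset \PP^1_{L_0}$ be the branch divisor of $\pi$: thus $D=S$ if $\infty$ is unramified and $D=S\cup\{\infty\}$ otherwise. Riemann--Hurwitz applied to $\pi$ together with the hypothesis $g(Y)\geq 2$ forces $|D|\geq 3$, so the marked curve $(\PP^1_{L_0},D)$ admits a stably marked model $\X_0/\OO_{L_0}$ whose special fiber $\Xb_0$ is a tree of projective lines over $k$ on which the specializations of the points of $D$ are smooth and pairwise distinct. Base-changing to $L$ yields a semistable model $\X/\OO_L$ of $\PP^1_L$ with the same special fiber, and on each irreducible component $\Xb_i\cong\PP^1_k$ one fixes a coordinate $x_i$, corresponding to a M\"obius transformation $x=A_ix_i$. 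Let $\Y$ denote the normalization of $\X$ in the function field of $Y_L$. To show that $\Y$ is semistable, I would analyze the local structure at each codimension-one point of $\X$: at a generic point of a component $\Xb_i$ this reduces to Kummer theory in residue characteristic prime to $n$, while at each node the conclusion follows from Abhyankar's Lemma combined with the fact that $L/L_0$ is the unique totally tame extension of degree $n$, which forces the preimage of a node again to be a disjoint union of nodes.

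For part~(b), the equation of $Y_L$ restricted to each component is obtained by an explicit substitution and rescaling. On the component $\Xb_i$ with coordinate $x_i$, one substitutes $x=A_ix_i$ into $y^n=f(x)$ and chooses a rescaling $y=\lambda_iy_i$ with $\lambda_i\in L^\times$ so that $\lambda_i^{-n}f(A_ix_i)\in\OO_L[x_i]$ has a nonzero reduction modulo the uniformizer of $L$. Assumption~\ref{fnass}~(a), together with our choice of $L$, prevents this reduction from being a nontrivial $n$-th power in $k[x_i]$, so reducing yields the equation
\[
  \Yb_i:\; y_i^n=\fb_i(x_i), \qquad \fb_i\in k[x_i],
\]
and by construction the restriction of $\pi$ to $\Yb_i$ is $(x_i,y_i)\mapsto x_i$. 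Since each irreducible component of $\Y_s$ lies over a unique component of $\Xb$, the disjoint union of the $\Yb_i$ is precisely the normalization $\Yb$.

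The main obstacle is the semistability claim in~(a) at the nodes of $\X$. The arithmetic of the tame extension $L/L_0$ interacts delicately with the local combinatorics of $\Xb_0$ at each node, and one must verify that $L$ is exactly large enough to absorb the vertical ramification this produces; this is the essential content of \cite{superell}. Once it is in place, part~(b) becomes a concrete computation, and the fact that $A_i$ can be chosen in $\GL_2(K)$ (rather than merely in $\GL_2(L)$) reflects the $\Gal(L/K)$-equivariance of the stably marked model of $(\PP^1,D)$.
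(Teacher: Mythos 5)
The paper's proof of this proposition is a one-line citation to \cite{superell} (Corollary 4.6 for part (a), Proposition 4.5 for part (b)), and your sketch correctly reconstructs the strategy of that reference: take the stably marked model $\X_0$ of $(\PP^1_{L_0},D)$, base-change to $\OO_L$, normalize in the function field of $Y_L$, and use tameness ($p\nmid n$) together with Abhyankar's lemma to control the normalization at the nodes. You are also right that this is where the technical work sits, and you defer it to \cite{superell} exactly as the paper does. One small point to add: your construction produces a semistable model, whereas the proposition is about the stable model. Passing to the stable model only contracts unstable $\PP^1$-components of $\Y_s$, so the description of $\Yb$ in part (b) persists; it is worth saying this explicitly.

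The step that does not work is your closing sentence, where you deduce $A_i\in\GL_2(K)$ from $\Gal(L/K)$-equivariance of the stably marked model. Equivariance only means that Galois permutes the components of $\Xb_0$ together with their Gauss valuations; it does not equip each individual component with a $K$-rational coordinate, and in general it cannot, since Galois may move a component to a different one. Example \ref{exa:compact_type} of the paper shows this concretely: there the matrices $A_i$ involve a primitive eighth root of unity $\zeta\notin\QQ_2$, so $A_i\notin\GL_2(\QQ_2)$. The expression $\GL_2(K)$ in the statement of Proposition \ref{prop:red} is evidently a slip for $\GL_2(\Kbar)$ (compare \cite[Prop.~4.2]{superell}), and that weaker rationality is what your construction actually delivers; your equivariance argument is aimed at proving something false as literally stated.
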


\begin{proof}
	Statement (a) is  a special case of \cite[Corollary 4.6]{superell}.
	Statement (b)  is part of the statement of \cite[Proposition 4.5]{superell}. 
\end{proof}
 
 In \cite[Proposition 4.2]{superell} it is explained how to determine the coordinates $x_i$. 

\subsection{The case of potential good reduction}

The results from \cite{superell}, summarized by Proposition \ref{prop:red}, allow an explicit description of the action of the inertia group $I=\Gal(L/K^\nr)$ on $\Yb$. In this subsection we explain this in detail under an additional assumption which guarantees that the curve $Y$ has potential good reduction. See Example \ref{exa:compact_type} for an example where this condition is not satisfied. 

\begin{assumption} \label{ass:equidistant}
  The set of roots $S$ is {\em equidistant}, i.e.\ for all pairs $\alpha_1,\alpha_2\in S$ of distinct roots, the valuation $v_{L_0}(\alpha_1-\alpha_2)$ is the same.
\end{assumption}	

\begin{prop} \label{prop:good_reduction}
  If Assumption \ref{ass:equidistant} holds, than $Y$ has potential good reduction. In particular, the curve $\Yb$ is absolutely irreducible.
\end{prop}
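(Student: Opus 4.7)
The strategy is to exhibit a single smooth model of $Y_L$ over $\mathcal{O}_L$ by an explicit affine change of coordinates suggested by the equidistant configuration, and then to argue that this smooth model exhausts all of $\Yb$ by a genus count. Concretely, let $v:=v_{L_0}(\alpha_1-\alpha_2)$ be the common valuation of the differences (Assumption \ref{ass:equidistant}), fix any $\alpha_0\in S$, and choose $\pi\in L_0$ with $v_{L_0}(\pi)=1$. I would start by performing the substitution
\[
   x = \alpha_0 + \pi^v x_1.
\]
The new roots $\bar\alpha_i := (\alpha_i-\alpha_0)/\pi^v$ then satisfy $v_{L_0}(\bar\alpha_i)\geq 0$ and $v_{L_0}(\bar\alpha_i-\bar\alpha_j)=0$ for $i\neq j$, so their residues in $k$ are pairwise distinct.

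Next I would rescale the $y$-coordinate. After the substitution above, the equation reads
\[
   y^n = c\,\pi^{vd}\prod_{\alpha\in S}(x_1-\bar\alpha)^{a_\alpha},
   \qquad d=\deg f.
\]
I want to absorb the leading constant $c\,\pi^{vd}$ into a new $y_1 := y/u$ with $u^n = c\,\pi^{vd}$ (up to a unit). Such a $u$ exists inside $L$ because $L/L_0$ is the totally tamely ramified extension of degree $n$, which is exactly what is needed to take $n$th roots of an arbitrary element of $L_0^\times$; this is why Assumption \ref{fnass}(c) and the choice of $L$ in \S\ref{sec:superell} were set up. After this rescaling the equation becomes
\[
   y_1^n = \bar c\prod_{\alpha\in S}(x_1-\bar\alpha)^{a_\alpha},
\]
with $\bar c\in\mathcal{O}_L^\times$. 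This defines an integral model of (an affine chart of) $Y_L$ over $\mathcal{O}_L$; its special fiber is a superelliptic curve over $k$ whose discriminant locus consists of $\abs{S}$ distinct points, with multiplicities $a_\alpha$.

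From here I would invoke Assumption \ref{fnass}(a): the gcd condition forces this reduced equation to define an absolutely irreducible smooth projective curve (after normalization and compactification in the standard way for superelliptic covers), and Riemann--Hurwitz shows that its genus equals the genus of the generic fiber $Y$, namely $g$. Thus one component $\Yb_i$ provided by Proposition \ref{prop:red}(b) already has genus $g$. Since $\sum_i g_{\Yb_i}=g$ by the compact-type assumption and the fact that the arithmetic genus of $\Y_s$ equals $g$, there can be no further components: $\Yb$ is absolutely irreducible, and $\Y_s$ is smooth. In particular $Y$ has good reduction over $L$.

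The main technical point to verify carefully is the existence of $u\in L$ realizing the required rescaling — equivalently, that the fractional part of $v_{L_0}(c\,\pi^{vd})/n$ lies in the value group of $L$. This is automatic from the construction of $L$ as the tame degree-$n$ ramified extension of $L_0$, but writing down a compatible uniformizer $u$ explicitly (and checking that the resulting $\bar c$ is a unit) is the only place where one must be careful. Everything else — the distinctness of the residues $\bar{\bar\alpha}$, the smoothness of the limit superelliptic curve, and the genus bookkeeping — is a routine consequence of the equidistant hypothesis combined with the results already recorded in \cite{superell} and in Proposition \ref{prop:red}.
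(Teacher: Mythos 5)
Your construction is essentially the same as the paper's: after reducing to an affine coordinate centered at a root and scaled by the common separation $\pi^v$ (the paper uses $x_0=(x-\alpha_1)/(\alpha_2-\alpha_1)$, you use $(x-\alpha_0)/\pi^v$; under equidistance these differ by a unit), and absorbing the leading coefficient by a degree-$n$ root taken in the tame extension $L/L_0$ (the paper's $y_0=\pi_L^{-m}y$, your $y_1=y/u$ — again identical up to a unit, since the Gauss valuation $m$ equals $v_{L_0}(c)+vd$ under equidistance), one obtains a superelliptic equation over $\OO_L$ with unit leading coefficient and pairwise distinct reduced roots. So the core calculation matches the paper exactly.

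The one place you diverge, and where there is a small gap, is the concluding step. The paper simply cites \cite[Prop.~4.5(2)]{superell} to see that the resulting ``obvious model'' (after taking the normalization of the projective closure) is smooth, which already finishes the proof. You instead try to close via genus bookkeeping: you assert that the curve you have built ``is one of the $\Yb_i$ provided by Proposition~\ref{prop:red}(b)'' and then sum the genera using the compact-type assumption. But Proposition~\ref{prop:red}(b) is a statement about the stable model, and your obvious model is not a priori semistable (the affine equation $y_1^n=\bar c\prod(x_1-\bar\alpha)^{a_\alpha}$ is singular over each $\bar\alpha$ with $a_\alpha\geq 2$, on both fibers), so it is not automatic that its normalized special fiber shows up among the $\Yb_i$. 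The clean way to finish is not via the compact-type genus formula at all: once you know the normalized special fiber is a smooth, absolutely irreducible curve of genus $g\geq 2$ (which follows from the distinctness of the $\bar{\bar\alpha}$, $p\nmid n$, and Riemann--Hurwitz, exactly as you note), the corresponding proper flat $\OO_L$-model is automatically \emph{the} stable model by uniqueness (\S\ref{subsec:stable_reduction}). That gives smoothness and irreducibility of $\Y_s$ at once, without invoking Assumption~\ref{ass:compact_type} or Proposition~\ref{prop:red}(b). With that small repair, your argument is correct and agrees with the paper's.
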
	

\begin{proof}
This is also just a special case of \cite[Proposition 4.2]{superell}. For later use, we recall the main argument in this case. As a result we obtain an explicit description of the special fiber $\Yb$. 

Write $S=\{\alpha_1,\ldots,\alpha_r\}$, with $r=\abs{S}$, and $a_i:=a_{\alpha_i}$. Set 
\begin{equation}\label{eq:coord_good}
    x_0 := \frac{x-\alpha_1}{\alpha_2-\alpha_1} \in L[x].
\end{equation}
We write $f$ as a polynomial in $x_0$,
\[
    f = \sum_i c_i x_0^i,
\]
and let 
\[
    m:= \min_i v_{L_0}(c_i) 
\]
denote the Gauss valuation of $f$ with respect to $x_0$. Here $v_{L_0}$ is the valuation on $L_0$, normalized such that $v_{L_0}(L_0^\times)=\ZZ$. Let $\pi_{L_0}$ be a prime element of $L_0$. We let $\overline{f}\in k[x_0]$ denote the reduction of $\pi_{L_0}^{-m}f$, i.e.\
\[
    \overline{f}:=\sum_i \overline{c}_i x_0^i, \quad \text{with}\; \overline{c}_i := \overline{\pi_{L_0}^{-m}c_i}\in k.
\]
Then Assumption \ref{ass:equidistant} implies that $\deg(\overline{f})=\deg(f)$ and that $\overline{f}$ has the $r$ distinct roots
\[
    \overline{\alpha}_i := \overline{\pi_{L_0}^{-m'}\frac{\alpha_i-\alpha_1}{\alpha_2-\alpha_1}}, \quad i=1,\ldots,r.
\]
Moreover, the multiplicity of $\overline{\alpha}_i$ in $\overline{f}$ is equal to $a_i$, the multiplicity of $\alpha_i$ in $f$. 

By the definition of the extension $L/L_0$ there exists a prime element $\pi_L$ of $L$ such that $\pi_L^n=\pi_{L_0}$. We set
\[
    y_0 := \pi_L^{-m}y.
\]
Now by construction the curve $Y_L$ can be written as the superelliptic curve
\[
    Y_L:\; y_0^n = \pi_{L_0}^{-m} f.
\]
Let $\Y/\OO_L$ be the `obvious model' of $Y_L$ corresponding to the choice of the coordinates $x_0,y_0$. The proof of \cite[Proposition 4.5.(2)]{superell} shows that $\Y$ is smooth and that the special fiber $\Yb:=\Y_s$ of $\Y$ is the superelliptic curve 
\[
   \Yb:\; y_0^n = \overline{f}(x_0)
\]
over $k$. Note that $\Yb$ has the same degree and the same ramification type as $Y$ (determined by the multiplicities $a_i$), which directly shows that $g(\Yb)=g(Y)$. 
\end{proof}	

\begin{rem}\label{rem:good_red}
  Assume that the two roots $\alpha_1,\alpha_2$ lie in a subextension $L_1\subset L_0$ which is totally ramified over $K$, and hence has residue field $\FF_K$. Then we may also assume that the prime element $\pi_{L_0}$ lies in $L_1$. It follows that $\overline{f}\in\FF_K[x_0]$. Therefore, the choice of the coordinates $x_0,y_0$ in the proof of Proposition \ref{prop:good_reduction} corresponds to an $\FF_K$-model $\Yb_0$ of $\Yb$. 
\end{rem}

Recall from \S\ref{sec:DDmethod} that the action of the  inertia group $I_K<W_K$ on $H^1_\et(Y_{\overline{K}}, \QQ_\ell)$ factors through a finite quotient group $I$. This group acts $k$-linearly on $\overline{Y}$.  By abuse of notation, we also call the finite group $I$  `inertia group'.  We start by making some general remarks on this action.

Let $H\subset\Aut_k(\Yb)$ denote the subgroup of automorphisms $\sigma:\Yb\to\Yb$ of the form
\begin{equation}\label{eq:Hdef}
\sigma^*(x) = x, \quad \sigma^*(y) = \zeta y,
\end{equation}

\begin{lem} \label{lem:superell_automorphisms}
	\begin{itemize}
\item[(a)]	The inertia group $I$ is contained in the normalizer of $H$ in $\Aut_k(\Yb)$. 
Moreover, it is a semidirect product $I=P\rtimes C$, where $P$ is an elementary abelian $p$-group and $C$ is cyclic of order $m$, where $m$ is prime to $p$ and divisible by $n$.
%\item[(b)] Let $\Xb_i$ be an irreducible component of $\Xb$. We write $\overline{\pi}_i:\Yb_i\to\Xb_i$ for the  restriction of $\overline{\pi}$ to $\Xb_i$. Denote by $P_i\subset P$ the stabilizer of $\Yb_i$.
%There exists a point $\infty\in \Xb_i$ that is stabilized by $P_i$. After a coordinate change of the coordinate $\xb_i$ from Proposition \ref{prop:red}.(b), we may assume that this point is $\xb_i=\infty$. 
\item[(b)] We have that
\[
\tau^\ast(x)=x+\xi, \quad \tau^\ast(y)=y,
\]
for every $\tau\in P\cap I$. The group $C\cap I$  is cyclic of order $m$, generated by an automorphism $\sigma$ such that
  \[
      \sigma^*(x) = \zeta^{n}x, \quad \sigma^*(y) = \zeta y,
  \]
  where $\zeta\in k$ has order $m$.
\end{itemize}
\end{lem}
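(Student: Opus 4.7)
The plan is to reduce the action of $I$ on $\Yb$ to its induced action on $\Xb\cong\PP^1_k$, and then exploit the classification of finite subgroups of the affine group of the line.

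Since the cover $\pi\colon Y\to X$ is defined over $K$, its reduction $\bar\pi\colon\Yb\to\Xb$ is $I$-equivariant with cover Galois group $H$, so $I$ normalizes $H$ in $\Aut_k(\Yb)$. Next I would compute the induced action of $I$ on $\Xb$ in the coordinate $x_0=(x-\alpha_1)/(\alpha_2-\alpha_1)$ from the proof of Proposition \ref{prop:good_reduction}. For $\sigma\in I=\Gal(L/K^{\nr})$ one finds directly
\[
\sigma^*(x_0)=\lambda x_0+\mu,\qquad \lambda=\frac{\alpha_2-\alpha_1}{\sigma\alpha_2-\sigma\alpha_1},\quad \mu=\frac{\alpha_1-\sigma\alpha_1}{\sigma\alpha_2-\sigma\alpha_1}.
\]
By Assumption \ref{ass:equidistant}, all differences of distinct roots in $S$ have the same valuation, which forces $v_{L_0}(\lambda)=0$ and $v_{L_0}(\mu)\geq 0$. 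Reducing modulo the maximal ideal, the image $\bar I$ of $I$ in $\Aut_k(\Xb)$ acts on $\bar x_0$ by affine transformations, and in particular fixes $\infty$.

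Any finite subgroup of the affine group $(k,+)\rtimes k^{\times}$ has elementary abelian $p$-translation part (because $(k,+)$ is killed by $p$) and cyclic prime-to-$p$ linear quotient (every finite subgroup of $k^\times$ is cyclic of order prime to $p$); since the two orders are coprime, Schur--Zassenhaus gives a splitting $\bar I=\bar P\rtimes\bar C$. The exact sequence $1\to I\cap H\to I\to\bar I\to 1$ has kernel a subgroup of $H\cong\mu_n$, hence of order prime to $p$. The Sylow $p$-subgroup $P$ of $I$ therefore maps isomorphically onto $\bar P$, and the prime-to-$p$ complement $C$ is an extension of $\bar C$ by $I\cap H$. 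A key point to check is that $I\cap H=H$, so that $C$ has order $m$ divisible by $n$; this follows from the construction $y=\pi_L^m y_0$ in the proof of Proposition \ref{prop:good_reduction} together with the fact that $\Gal(L/L_0)\cong\mu_n$ acts on $\pi_L$ by multiplication by roots of unity, which reduces to the $H$-action on $y_0$.

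Part (b) follows by writing out these lifts explicitly. For $\tau\in P$, projecting to a translation $\bar x_0\mapsto \bar x_0+\bar\xi$ forces $\tau^*(x)=x+\xi$ and $\tau^*(y)=\zeta y$ for some $\zeta\in\mu_n(k)$; since $P$ is a $p$-group and $\mu_n$ has order prime to $p$, necessarily $\zeta=1$. For a generator $\sigma$ of $C$ acting as a scaling $\bar x_0\mapsto c\bar x_0$, any lift satisfies $(\zeta y)^n=\bar f(cx)$, which using the explicit form of $\bar f$ yields $\zeta^n=c$; choosing $\zeta$ to be a primitive $m$-th root of unity gives a generator of $C$ of the stated form, with $c=\zeta^n$ of order exactly $m/n$ in $k^\times$. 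The main obstacle will be the cyclicity and order $m$ (divisible by $n$) of $C$: this requires the careful synchronization of the scaling on $x$ with the deck-transformation action coming from $I\cap H=H$, relying on the explicit coordinate description of the stable model $\Y$ over $\OO_L$.
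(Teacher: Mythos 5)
The paper's proof of this lemma is essentially a citation: the first part of (a) follows because $\pi$ is defined over $K$, and the rest is referred to \cite[Prop.~5.5]{superell}. Your proposal is an attempt at a self-contained argument. The structural idea — project $I$ to $\Aut_k(\Xb)$, compute the induced action in the coordinate $x_0$ using equidistance, classify finite subgroups of the affine group of the line, and split via Schur--Zassenhaus — is reasonable, and the affine-action-on-$\Xb$ step is the same device the paper uses in the proof of Proposition~\ref{prop:GalMax}. But there are two genuine gaps.

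First, cyclicity of $C$. You produce $C$ as an extension $1\to I\cap H\to C\to\bar C\to 1$ with $I\cap H$ cyclic of order dividing $n$ and $\bar C$ cyclic of prime-to-$p$ order, and you honestly flag that the cyclicity and order of $C$ "will be the main obstacle." It really is: an extension of a cyclic group by a cyclic group, even when both orders are prime to $p$, need not be cyclic (dihedral groups already give counterexamples). Nothing in your argument closes this. The clean way to get cyclicity is intrinsic to the curve: $C$ has a fixed point on $\Yb$ (a point above the fixed point $\infty\in\Xb$, once one arranges that $\infty$ is a branch point, or more generally the unique totally ramified point in the $C$-orbit one is normalizing around), and a finite group of order prime to $p$ acting faithfully on $k[[t]]$ embeds into $k^\times$ via the tangent action, hence is cyclic. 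This step is missing.

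Second, the claim $I\cap H=H$. Your justification is that $\Gal(L/L_0)\cong\mu_n$ acts on $\pi_L$ by $n$-th roots of unity and hence on $y_0=\pi_L^{-m}y$ by $\zeta\mapsto\zeta^{-m}$. That map $\mu_n\to H$ is surjective only if $\gcd(m,n)=1$, where $m$ here is the Gauss valuation from the proof of Proposition~\ref{prop:good_reduction}; if $d=\gcd(m,n)>1$ then $Y$ already has good reduction over the sub-extension of degree $n/d$, so after passing to the \emph{minimal} $L$ of Proposition~\ref{prop:Gfaithful} one would only get $I\cap H$ of order $n/d$, not $H$. You need to address why the minimal $L$ nevertheless satisfies $I\cap H=H$ (equivalently, why minimality forces $\gcd(m,n)=1$ in this normalization); as written this is asserted, not proved. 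A similar imprecision occurs in your part~(b): the relation $\bar f(cx)=\zeta^n\bar f(x)$ gives $\zeta^n=c^{a_1}$ by comparing the lowest-order term, where $a_1$ is the multiplicity of the root at $x_0=0$; the identity $\zeta^n=c$ only holds after a further normalization of the coordinate, which you do not spell out.
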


\begin{proof}	
  The first statement in (a)   follows immediately from the fact that the map $\pi$ in equation \eqref{eq:phidef} is defined over $K$.  Part (b)  and the second statement in (a) are contained in \cite[Proposition 5.5.]{superell}.
\end{proof}

%\begin{rem}
%\begin{itemize}
  %  \item[(a)]  In the case that $Y$ has potentially good reduction, the curve $\Yb$ is irreducible, and we have that $P_i=P$.
    %\item[(b)] 
 %   The integer $m$ from Lemma \ref{lem:superell_automorphisms}.(b) is divisible by the exponent $n$ of the superelliptic curve $Y$.  Write $m_i=n\cdot d_i$.  Then the exponent $a_j$ of  every monomial $\xb_i^{a_j}$ occurring in the polynomial $\fb_i$ from Proposition \ref{prop:red}.(b) are congruent mod $d_i$.  
%\end{itemize}
%\end{rem}

Lemma \ref{lem:superell_automorphisms} imposes a strong restriction on the inertia group $I$. In the Examples \ref{exa:Picard1} and \ref{exa:Picard2} the bound from Proposition \ref{prop:GalMax} below is attained.

\begin{prop} \label{prop:GalMax}
Let $Y/K$ be a superelliptic curve with potential good reduction.  We assume, for simplicity,  that  $\pi:Y\to X$ is branched at $x=\infty\in X\simeq \PP^1_K$. Let $e+1$ be the number of branch points of 
$\pi$.   Then the cardinality of the wild inertia group $P$ from Lemma \ref{lem:superell_automorphisms} is bounded by $e$.
\end{prop}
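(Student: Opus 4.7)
The plan is to combine the explicit description of the wild inertia action from Lemma \ref{lem:superell_automorphisms}(b) with a free-orbit count on the branch locus of the reduced cover $\bar{\pi}:\bar{Y}\to \bar{X}=\PP^1_k$.

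First I would unpack the action. By Lemma \ref{lem:superell_automorphisms}(b), every $\tau\in P$ satisfies $\tau^*(x)=x+\xi_\tau$ and $\tau^*(y)=y$ for some $\xi_\tau\in k$. The assignment $\tau\mapsto \xi_\tau$ is a group homomorphism $P\to (k,+)$, and it is injective: if $\xi_\tau=0$, then $\tau$ fixes both coordinates, hence acts trivially on $\bar{Y}$, and must be the identity by the faithfulness of the $G$-action (Proposition \ref{prop:Gfaithful}). So $P$ embeds as an additive subgroup of $k$, and the cover $\bar{\pi}$ is $P$-equivariant with $\tau$ acting on $\bar{X}$ by the translation $x\mapsto x+\xi_\tau$. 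This induced action fixes $\infty$ and is free on $\mathbb{A}^1_k$.

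Next I would identify the finite branch points as a $P$-stable set of the correct size. Since $P$ permutes the ramification locus of $\bar{\pi}$ inside $\bar{Y}$, it permutes the branch locus inside $\bar{X}$. By Proposition \ref{prop:good_reduction}, the branch locus of $\bar{\pi}$ has the same cardinality and ramification type as that of $\pi$: in particular it consists of $e+1$ points with $\infty$ being one of them, and the remaining $e$ branch points lie in $\mathbb{A}^1_k$. Since $P$ acts freely on $\mathbb{A}^1_k$, every $P$-orbit on this set of $e$ points has size exactly $|P|$, whence $|P|$ divides $e$ and, in particular, $|P|\leq e$.

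The argument is essentially formal once the translation description of the wild inertia is in hand, so I do not anticipate a real obstacle; the only subtle point is that the coordinate $x$ appearing in Lemma \ref{lem:superell_automorphisms}(b) must be one in which the $e$ finite branch points of $\bar{\pi}$ are visible as distinct points of $\mathbb{A}^1_k$, and this is precisely what is furnished by the explicit good-reduction model constructed in the proof of Proposition \ref{prop:good_reduction}.
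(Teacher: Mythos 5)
Your proof is correct and follows essentially the same route as the paper's: both observe that $P$ acts on $\Xb=\PP^1_k$ by translations (the paper via the quotient $I/H$, you via the explicit description in Lemma \ref{lem:superell_automorphisms}(b)), that this action is free on $\mathbb{A}^1_k$ and permutes the $e$ finite branch points of $\overline{\pi}$, and hence that $|P|\leq e$. Your additional remarks — the injectivity of $\tau\mapsto\xi_\tau$ via faithfulness, the explicit divisibility $|P|\mid e$ — are correct but just fill in steps the paper leaves implicit.
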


\begin{proof}
In the situation of the proposition, the map $\pi$ reduces to a finite flat map
\[
\overline{\pi}:\Yb\to \Xb
\]
between smooth projective curves. The branch points of $\pi$ specialize to pairwise distinct points on $\Xb$, which are exactly the branch points of $\overline{\pi}$. It is no restriction to assume that $\infty$ specializes to $x=\infty\in \Xb$.

The (finite) inertia group $I$ acts $k$-linearly on $\Yb$. 
By Lemma \ref{lem:superell_automorphisms}.(a) the subgroup $H$, defined in \eqref{eq:Hdef}, is contained in the center of $I$. The quotient $I/H$ acts on $\Yb/H=\Xb.$ In particular, the wild part $P$ of the inertia group acts on $\Xb$. 
Since $\infty\in X$ is $K$-rational, it is a fixed point of $I/H$.  We conclude from Lemma \ref{lem:superell_automorphisms}.(b) that $P$ acts freely on the $e$ branch points of $\overline{\pi}$ different from $\infty$. The statement follows.
\end{proof}

\subsection{Models of superelliptic curves}\label{sec:supermodels}
In Lemma \ref{lem:phi-models} we described a correspondence between models of a curve over a finite field and Frobenius elements. 
In this section, we describe the  model corresponding to a given Frobenius element explicitly in the case of superelliptic curves. 

In this section, we let
$\Yb_0/\FF_q$ 
be a, not necessary connected, superelliptic curve given by an equation
\begin{equation} \label{eq:superell_model0}
\Yb_0:\; y^n = f(x),
\end{equation}
where $n$ is prime to $p=\Char(k)$, and $f\in \FF_q[x]$ is nonconstant. Set $\Yb:=\Yb_0\otimes_{\FF_q} k$.
 By definition, $\Yb$ is the smooth projective model of the affine curve given by the equation \eqref{eq:superell_model0}. In particular, the ring of rational functions $F_{\Yb}$ is the ring of fractions of the affine $k$-algebra $k[x,y\mid y^n=f(x)]$. 
Let $\phi_0$ denote the Frobenius element corresponding to the $\FF_q$-model $\Yb_0$ of $\Yb$ via the correspondence from Lemma \ref{lem:phi-models}. It is determined by the following conditions:
\begin{equation} \label{eq:superell_model1}
   \phi_0^*|_k = \Frob_q, \quad \phi_0^*(x) = x, \quad \phi_0^*(y) = y.
\end{equation}

In this section we explain how to explicitly construct the model $\Yb_\phi$ corresponding to an arbitrary $q$-Frobenius element. For our purpose, it is enough to know how to do this for Frobenius elements of the form $\phi=\phi_0g$, where $g\in I\subset \Aut_k(\Yb)$ is an element of the group $I$ described in Lemma \ref{lem:superell_automorphisms}. It is easy to see that the corresponding models are precisely the `superelliptic models' of $\Yb$, i.e. $\FF_q$-models of $\Yb$ that  are defined by an equation of the form \eqref{eq:superell_model0}.  

\begin{exa} \label{exa:superell_model_tame}
Let $\sigma\in I$ be an element of order prime to $p$. By Lemma \ref{lem:superell_automorphisms}.(b) $\sigma$ is conjugate to an element of the cyclic subgroup $C$. This means that, up to a change of the coordinate $x$, we may assume that
\begin{equation} \label{eq:superell_model_tame1}
   \sigma^*(x)=\xi x, \quad \sigma^*(y) = \zeta y,
\end{equation}
where $\xi,\zeta \in k^\times$ are such that
\begin{equation} \label{eq:superell_model_tame2}
  f(\xi x) = \xi^m f(x), \quad \zeta^n=\xi^m,
\end{equation} 
for a certain integer $m$. 
To construct the model $\Yb_\phi$ with $\phi=\phi_0\sigma$ we try to find new coordinates that are invariant under $\phi$. Our Ansatz is to write
\[
   x_1=\alpha x, \quad y_1=\beta y,
\]
with $\alpha,\beta\in k^\times$ to be determined. A brief calculation, using \eqref{eq:superell_model_tame1} and \eqref{eq:superell_model_tame2}, shows that
\[
  \phi^*(x_1) = \alpha^{q-1}\xi x_1, \quad 
  \phi^*(y_1) = \beta^{q-1}\zeta y_1.
\]
Therefore, we choose $\alpha,\beta\in k^\times$ satisfying
\begin{equation} \label{eq:superell_model_tame3}
  \alpha^{q-1} = \xi^{-1}, \quad \beta^{q-1} = \zeta^{-1}.
\end{equation}
With this choice, we obtain the $\FF_q$-model 
\begin{equation} \label{eq:superell_model_tame4}
  \Yb_\phi:\; y_1^n = f_1(x_1) := \beta^n f(\alpha^{-1}x).
\end{equation}
We invite the reader to check that $f_1\in\FF_q[x_1]$, using \eqref{eq:superell_model_tame2} and \eqref{eq:superell_model_tame3}.
\end{exa}

\begin{exa} \label{exa:superell_model_wild}
  Now let $\tau\in P$ be an element of $I$ whose order is divisible by $p$. It follows from Lemma \ref{lem:superell_automorphisms}.(b) that $\tau$ is of the form
  \[
      \tau^*(x) = x+\xi, \quad \tau^*(y) = \zeta y,
  \]  	
  where $\xi,\zeta\in k$ are such that
  \begin{equation} \label{eq:superell_model_wild1}
      f(x+\xi) = f(x), \quad \zeta^n = 1.
  \end{equation}
  The first condition in \eqref{eq:superell_model_wild1} holds if and only if $f$ is an additive polynomial such that $f(\xi)=0$.
  
  Set $\phi:=\phi_0\tau$. In order to find the model $\Yb_\phi$ we use the Ansatz
  \begin{equation} \label{eq:superell_model_wild2}
     x_1 = x + \alpha, \quad y_1 = \beta y.
  \end{equation}
  One checks that these coordinates are $\phi$-invariant if and only if
  \begin{equation} \label{eq:superell_model_wild3}
    \alpha^q-\alpha + \xi = 0, \quad \beta^{q-1} = \zeta^{-1}.
  \end{equation}
  With $\alpha, \beta$ chosen as in \eqref{eq:superell_model_wild3} we find the model
  \begin{equation} \label{eq:superell_model_wild4}
    \Yb_\phi:\; y_1^n = f_1(x) := \beta^n f(x_1-\alpha).
  \end{equation}
  Again, it is a nice exercise to check that $f_1\in\FF_q[x]$. 
\end{exa}

\section{Examples} \label{sec:exa}

We present three examples of semisimple Weil representations $\rho$  coming from superelliptic curves with potential reduction of compact type. More details on the first two examples can be found in \cite{Do}.  

We apply the strategy outlined in Section \ref{sec:DDmethod}. In the examples we give, the Weil representation is of the form $\rho=\rho_0\otimes \chi$, where $\rho_0$ is an Artin representation and $\chi$ an unramified character. As explained in Section \ref{sec:DDmethod}, we may reduce to this case. This was already observed in \cite{DD}. 
The unramified character $\chi$ is determined by the choice of a root of the local polynomial $P(\phi_0^{-1}, T)$ in the notation of Section \ref{sec:DDmethod}.
For concrete calculations, the most challenging part is calculating the Artin representation $\rho_0$, which factors through  a finite group $\overline{G}$. Our examples illustrate that the group $\overline{G}$ can be quite large. In fact, Example \ref{exa:Picard1} was chosen so that the group $\overline{G}$ is as large as possible for a Picard curve with potential good reduction to characteristic $p\neq 3$.  

 With our method, one can compute the trace of $\rho_0(g)$ for all $g\in \overline{G}$ by point counting on a suitable twist of $\overline{Y}$, as explained in Section \ref{sec:DDmethod}.
 In the concrete examples we discuss below, we simplify the calculation by using additional information. This is helpful when doing calculations by hand, as it reduces the amount of point counting one has to do. We briefly describe the two types of arguments we use.

 If the character table of the group $\overline{G}$ is known, it suffices to identify the character of $\rho_0$ as a sum of irreducible representations. The group $\overline{G}$ fits in an exact sequence \eqref{eq:Gbar}. Therefore one could use the facts of representations theory of semi-direct products, see for example \cite{Mackey}. 
  In the first and last example, the group $\overline{G}$ is a semi-direct product with an abelian normal subgroup $I$, and we can use a more elementary version of these results that can be found in Section 8.2 of \cite{Serre_RepLin}. In this case, it is easy to find the irreducible representations of $\overline{G}$ from those of $I$ and $\langle \overline{\phi_0}\rangle$.
   In the second example, the normal subgroup $I$ is no longer abelian. However, in this case it is easy to see that the representation $\rho_0$ is irreducible, and it is not necessary to use the full character table of the group to characterize the representation.

A further ingredient we use in the examples below is that $I$ acts on the curve $\overline{Y}$ via explicitly known automorphisms; this is the residual action described in \eqref{eq:residual_action}. From this action, one can compute the genus of $g(\overline{Y}/H)$, which is useful, since we have
\[
\dim_\CC V^H=2g(\overline{Y}/H),
\]
for subgroups $H<I$ of $I$. This is a special case of a well-known fixed point formula, see e.g.~\cite[Proposition 1.3]{Ellenberg}.

\begin{exa}\label{exa:Picard2} 
 Consider the smooth projective curve $Y/\QQ_2$ defined by the affine equation
\[
Y:\; y^3=x^4+2x^3+2=:f(x).
\]
As in the proof of Proposition \ref{prop:good_reduction}, we let $L_0/K^{\nr}$ be the splitting field of $f\in K^{\nr}[x]$.  Choose two distinct roots  $\alpha_1, \alpha_2\in L_0$ of $f$. We define coordinates $x_0, y_0$ for $Y$ by
\begin{equation}\label{eq:coordPicard2}
x=(\alpha_2-\alpha_1)x_0+\alpha_1, \qquad y=\beta^4 y_0,
\end{equation}
where $\beta^3=\alpha_2-\alpha_1.$  Define $L=L_0(\beta)$ and fix a primitive $3$rd root of unity $\zeta_3\in L$. 
 Then $L/K^{\nr}$ satisfies the conditions in Proposition \ref{prop:red}.(a). The extension $L/K^{\nr}$ is Galois, with Galois group $I \simeq C_2^2\times C_3$. The coordinates $x_0, y_0$ define a smooth model $\mathcal{Y}$ of $Y_L$. 
 Its reduction   is given by 
 \begin{equation}\label{eq:specialfiber2}
 	\Yb: \; {y}_0^3={x}_0^4+{x}_0.
 \end{equation}

 As explained in \S \ref{subsec:stable_reduction}, the inertia group $I$ acts as $k$-linear automorphisms on $\Yb$. We may choose generators $\tau_1, \tau_2, \sigma$ of $I$ acting on $\Yb$ as
 \begin{equation}\label{eq:Picard2_group1}
 \begin{split}
 \tau^\ast_1({x}_0, y_0)&=(x_0+1, y_0),\\
  \tau^\ast_2(x_0, y_0)&=(x_0+\zeta_3, y_0),\\
 \sigma^\ast(x_0, y_0)&=(x_0, \zeta_3y_0),
 \end{split}
 \end{equation}
where $\zeta_3\in \FF_4$ is the reduction of $\zeta_3\in L$.

 We write $\Yb_0$ for the $\FF_2$-model of $\Yb$ defined by  \eqref{eq:specialfiber2} and let $\phi_0\in W_K$ be the $2$-Frobenius element corresponding to $\Yb_0$ under the correspondence from Lemma \ref{lem:phi-models}.  By point counting over extensions of $\FF_2$ we find
 \begin{equation}\label{eq:localpoly2}
 P(\phi_0^{-1}, T)=8T^6+1.
 \end{equation}
Since all roots of $	P(\phi_0^{-1}, T)$ differ from each other by a root of unity, we conclude that
 \[
 \rho_0:=\rho\otimes\chi^{-1}:  W_K\to \GL(V)
 \]	
 is an Artin representation. Here $\chi$ is the $1$-dimensional unramified representation that sends $\phi_0$  to $1/\sqrt{-2}$.

By considering the roots of the local polynomial we see that  the eigenvalues of $\rho_0(\phi_0)$ are exactly the $6$-th roots of unity, each with multiplicity one. In particular, $\rho_0(\phi_0)$ has order $6$. We conclude that the Artin representation factors through the finite group
\[
 1\to I\to \overline{G}\to \langle \overline{\phi}_0\rangle \to 1,
\] 
 where $\overline{\phi}_0$ is an element of order $6$. Note that $\overline{G}$ is the Galois group of a subextension of $L/K$. Using that $\tau_2, \sigma\in \Aut_k(\Yb_0)$ are defined over $\FF_4$, but not over $\FF_2$, it follows  that the relations in this group are 
 \begin{equation}\label{eq:Picard2_group}
 \overline{\phi}_0\tau_1\overline{\phi}_0^{-1}=\tau_1, \quad
 \overline{\phi}_0\tau_2\overline{\phi}_0^{-1}=\tau_1\tau_2, \quad
 \overline{\phi}_0\sigma\overline{\phi}_0^{-1}=\sigma^{-1}.
 \end{equation}
 For example using \cite{Miller}, we identify this group as (72,30) in the library of small groups. A character table of this group can be found on Tim Dokchitser's website \href{https://people.maths.bris.ac.uk/~matyd/GroupNames/61/C3xC3sD4.html}{GroupNames.org} (\cite{GroupNames}).

 \bigskip
 We first illustrate how to calculate the trace of $\rho_0(g)$ using the strategy of Section \ref{sec:DDmethod} for various $g\in\overline{G}$. This works slightly different, depending on whether $g\in I$ or not. We perform the calculation for one group element in each case. 
 
 We first consider the $2$-Frobenius element $\phi_2:=\phi_0\tau_2\not\in I$. We apply the method from Example \ref{exa:superell_model_wild} to compute the twist  $\Yb_2$ of $\Yb_0$ corresponding to  $\phi_2$. 
 We define new coordinates $(x_2, y_2):=(x_0+\zeta_3, y_0)$. 
 From \eqref{eq:superell_model_wild4} we find the twist
 \[
 \Yb_2:\; y_2^3=x_2^4+x_2+1.
 \]
 By counting points on $\Yb_2$, we find the local polynomial
 \[
 P(\phi_2^{-1}, T)=1+4T^2+8T^4+8T^6=(2T^2+1)(4T^4+2T^2+1).
 \]
 We conclude that the eigenvalues of $\phi_2$ multiplied by $\sqrt{-2}$  are exactly the $4$ primitive $12$-th roots of unity, together with $\pm 1$. We conclude that the trace of $\rho_0(\phi_2)$ is $0$.

 We next consider the group element $g=\tau_1\in I$. The element $g$ is not a Frobenius element, so we can not apply Lemma \ref{lem:phi-models} directly.  We apply the following trick from \cite{DD}. We consider the $2^6$-Frobenius element $\phi_0^6$. We have already seen that $\rho_0(\phi_0^6)$ is the identity. It follows that the trace of $\rho_0(g)$ is equal to the trace of the Frobenius element $\rho_0(\phi_0^6g)$, which we can compute by point counting as before.

 Applying the method of Example \ref{exa:superell_model_wild} once more, we find that the twist $\Yb_4$ corresponding to $\phi_4:=\phi_0^6\tau_1$ is
 \[
 \Yb_4:\; y_4^3=x_4^4+x_4+\zeta_3^2,
 \]
 where $(x_4, y_4)=(x_4+c_4, y_0)$ with $\min_{\FF_4}(c_4)=x^2+\zeta_3^2x+1$. 
 The corresponding  local polynomial over $\FF_{2^6}$ is:
 \[
 P(\phi_4^{-1})=(8T+1)^4(8T-1)^2.
 \]
 Therefore the eigenvalues of $\rho_0(\phi_0^6\tau_1)$, and hence of $\rho_0(\tau_1)$, are $1$ (with multiplicity $2$) and $-1$ (with multiplicity $4$). For $\rho_0(\phi_0^6\tau_2)$ we find exactly the same result. The eigenvalues of  $\rho_0(\phi_0^6\sigma)$ are  $\omega, \omega^2$ (each with multiplicity one) and $-\omega, -\omega^2$  (each with multiplicity two). Here $\omega\in \CC$ is a primitive $3$rd root of unity. We stress that the choices of the primitive roots of unity $\zeta_3\in L$ and $\omega\in \CC$ are independent of each other. In  \cite[Example 4.3.4]{Do} the character of $\rho_0$ is computed by applying this approach to further elements of $\overline{G}$.

 \bigskip 
 Rather than computing the trace of $\rho_0(g)$ for many more elements of $\overline{G}$, we explain how much further information we really need to determine the representation $\rho_0$. First note that $\overline{G}$ has $12$ irreducible representations of dimension $1$ and $15$ of dimension $2$. This follows directly from the character table, but also from Proposition 25 in Section 8.2 of \cite{Serre_RepLin}.  
 Every $1$-dimensional irreducible representation factors through a cyclic quotient of $\overline{G}$, hence its kernel contains an abelian subgroup $A$ of cardinality at least $12$. Using the group action \eqref{eq:Picard2_group} and the Riemann--Hurwitz formula, one checks that we have $g(\overline{Y}/A)=0$ for every subgroup $A$ that is the kernel of a $1$-dimensional representation.   
We conclude that  that $\rho_0$ does not have a $1$-dimensional subrepresentation. Hence $V$ is the sum of three $2$-dimensional irreducible representations. 

 One of these is easily identified. Write $V_1=V^{\langle \tau_1\rangle}$ for the fixed space of $\tau_1$. This is a rational representation  on which $\tau_2$ acts as $-\text{Id}$. There is the unique such irreducible representation of dimension $2$.

The other two irreducible subrepresentation are  $V_2=V^{\langle \tau_2\rangle}$ and $V_3=V^{\langle \tau_1\tau_2\rangle}$. They are complex conjugate.  The trace of the restriction of  $\rho_0(\tau_2)$ (resp.~$\rho_0(\sigma)$) to $V_2$ and $V_3$ is $0$ (resp.~$-1$).  There are exactly two pairs of irreducible representations satisfying the requirements. To decide which ones  occur, it suffices to compute the action of a suitable element of order $6$, for example $g=\phi_0\sigma\tau_2$.  One computes that $\rho_0(\phi_0\sigma\tau_2)$ has eigenvalues $1, -\omega, -\omega^2$ (each with multiplicity $2$). This determines the representation completely, see \cite[Example 4.3.4]{Do}.
 	\end{exa}

\begin{exa}\label{exa:Picard1}
 Consider the smooth projective curve $Y/\QQ_2$ defined by the affine equation
	\[
	Y:\; y^3=x^4+2x+2.
	\]
Let $L_0/K^{\nr}$ be the splitting field of $f$ and $L=L_0(\beta)$, where $\beta^4=\alpha_2-\alpha_1$ for two different roots $\alpha_1, \alpha_2\in L_0$ of $f$. The extension $L/K^{\nr}$ is Galois with Galois group $I_2\simeq C_2^2\rtimes C_9$. 
	
	As in Example \ref{exa:Picard2}, $Y_L$ has good reduction over $L$. The smooth model is given by coordinates $(x_0, y_0)$ that are defined similar to \eqref{eq:coordPicard2}.  Its reduction  is the smooth projective curve given by
	\begin{equation}\label{eq:specialfiber1}
	\Yb: \; y_0^3=x_0^4+x_0.
	\end{equation}
 As in Example \ref{exa:Picard2}, the inertia group $I_2$ acts $k$-linearly on $\Yb$.  We may choose generators $\tau_1, \tau_2, \psi$
 of $I_2<\Aut_k(\Yb)$, where $\tau_1, \tau_2$ are as in \eqref{eq:Picard2_group1} and
 \[
 \psi(x_0, y_0)=(\zeta_9^3x_0, \zeta_9y_0). 
	\]
Here $\zeta_9\in\FF_{2^6}$ is a primitive $9$th root of unity that satisfies $\zeta_9^3=\zeta_3$ for the fixed $3$rd root of unity $\zeta_3\in \FF_4$ used in \eqref{eq:Picard2_group1}. With this choice, we have that $\psi^3=\sigma$ for $\sigma$ as in \eqref{eq:Picard2_group1}. 

Since the $\FF_2$-model  $\Yb_0$, together with the automorphisms induced by $\langle \tau_1, \tau_2\rangle$,  is exactly the same as in Example \ref{exa:Picard2}, the trace of $\rho_0(\tau_i)$ and of $\rho_0(\overline{\phi}_0)$ is exactly the same as what we computed in the previous example.

	We write $\Yb_0/\FF_2$ for the $\FF_2$-model given by the equation \eqref{eq:specialfiber1} and write $\phi_0$  for the corresponding $2$-Frobenius element.
As in Example \ref{exa:Picard2}, the Weil representation may be written as
$\rho=\rho_0\otimes \chi$, where $\rho_0$ is an Artin representation and $\chi$ the unramified one-dimensional representation that sends $\phi_0$ to $1/\sqrt{-2}$. The Artin representation factors through
\[
1\to I\to \overline{G}\to \langle\overline{\phi}_0\rangle \to 1,
\]
where $\overline{\phi}_0$ has order $6$, as before. 

To compute the trace of $\rho_0(\psi)$, we apply the same trick as in Example \ref{exa:Picard2}.
The model $\Yb_5$ of $\Yb$ corresponding to the $2^6$-Frobenius element $\phi_5:=\phi_0\psi$ is given by 
\[
\Yb_5:\; y_5^3=ux_5^4+x_5,
\]
where $u\in \FF_{2^6}$ satisfies $u^7=\zeta_9$. By counting points on $\Yb_5$ over extensions of $\FF_{2^6}$ we find that the eigenvalues of $\rho_0(\psi)$ are the $6$ primitive ninth roots of unity, each with multiplicity one. Details on the calculation can be found in \cite[Example 4.3.6]{Do}.

We claim that the Artin representation $\rho_0$ is irreducible.   To see this, we consider the restriction of $\rho_0$ to the subgroup $H=\langle \psi, \overline{\phi}_0\rangle<\overline{G}$.  The previous calculation implies that $\rho_0(\psi)$ has order $9$. In Example \ref{exa:Picard2} we already showed that $\rho(\overline{\phi}_0)$ has order $6$. In the group $H$ we have the relation
\[
\overline{\phi}_0 \psi \overline{\phi}_0^{-1}=\psi^a, 
\]
for some element $a\in (\ZZ/9\ZZ)^\ast$ of order $6$. It follows that the restriction $\rho_0|_H$, and hence $\rho_0$, is irreducible.   In fact, $\rho_0|_H$ is the unique irreducible representation of dimension $6$, see  Proposition 25 in Section 8.2 of \cite{Serre_RepLin}.  The Artin representation $\rho_0$ is now completely determined.
	\end{exa}

The final example treats a Picard curve that does not have potential good reduction to characteristic $p=2$.
	
	\begin{exa}\label{exa:compact_type}
Consider the smooth projective curve $Y/\QQ_2$ defined by the affine equation
	\[
	Y:\; y^3=f(x)=x^4+1.
	\]
	By Proposition \ref{prop:red} this curve has  semistable reduction over $L:=\QQ^{\nr}_2(\zeta_8, \sqrt[3]{2})$, with $I:=\Gal(L/K^{\nr})\simeq C_2^2\times C_3$. Note that the roots of $f$ are  not equidistant in the sense of Assumption \ref{ass:equidistant}.
	
  Write $\Y$ for the semistable model of $Y_L$ and $\Yb$  for the normalization of the special fiber $\Y_s$. The curve $\Yb$ consists of a chain of three irreducible components, each with genus $1$.  We choose a primitive $8$th root of unity $\zeta\in \QQ_2^{\nr}$ and define $3$ coordinates on $X=\PP^1_x$ by
\[
x=(\zeta^3-\zeta)x_1+\zeta, \qquad  x=(\zeta^5-\zeta)x_2+\zeta, \qquad x=(\zeta^7-\zeta)x_3+\zeta^3.
\]
The coordinates $x_i$ are defined similar to \eqref{eq:coord_good}. However, each triple  of  branch points of $\pi:Y\to X$ is equidistant with respect to exactly one of these three coordinates. Setting $y=\sqrt[3]{2} y_i$ for $i=1,2,3$, we obtain as equation for the three connected components of $\Yb$:
\[
\Yb_1:\; y_1^3=x_1^4+x_1^2, \qquad \Yb_i:\; y_i^3=x_i^2+x_i, \quad \text{ for }i=2,3.
\]   
 We refer to \cite[\S5.1.2]{MichelDiss} for more details on the computation.

We note that the coordinates $x_i, y_i$ for $i=1,2,3$ may all be already defined over a totally ramified extension of $\QQ_2.$ Therefore we may choose a $2$-Frobenius element $\phi$ that fixes all these coordinates. (This follows as in Remark \ref{rem:good_red}.) 

Since $|\Yb_i(\FF_2)|=3$, we find that
\[
P(\phi^{-1}, T)=(1+2T^2)^3.
\]
As in Example \ref{exa:Picard2}, we let $\chi$ be the  $1$-dimensional unramified representation that sends $\phi$ to $1/\sqrt{-2}$. Then $\rho_0:=\rho\otimes\chi^{-1}$ is an Artin representation, and it factors  through the group $I\rtimes \langle \overline{\phi}\rangle\simeq C_2^2\times S_3.$ This group has label (24, 14) in the library of small groups. A character table can be found on \cite{GroupNames}. We write $\sigma\in I$ for one of the elements of order $3$. As in Example \ref{exa:Picard2}, we have that
\[
\sigma^\ast(x_i, y_i)=(x_i,\zeta_3y_i)
\]
for an element $\zeta_3\in \FF_4$ of order $3$. We conclude $\phi\sigma\phi^{-1}=\sigma^2\in I$. Since $g(\Yb_i/\langle \sigma\rangle)=0$ for all $i$, it follows as in Example \ref{exa:Picard2} that $\rho_0$ is the sum of three irreducible representations of dimension $2$.  To determine which of the four $2$-dimensional irreducible representations occur, it suffices to consider the restriction of $\rho_0$ to the Sylow $2$-subgroup of $I$. 

Let $\tau_1\in I$ be induced by $\tau_1(\zeta)=\zeta^{-1}$. Then $\tau_1$ acts as $\tau_1(x_1, y_1)=(x_1+1, y_1)$ on $\Yb_1$ and permutes the  other two components of $\Yb$. The automorphism $\tau_2\in I$ induced by $\tau_2(\zeta)=\zeta^5$ acts trivially on $\Yb_1$ and as an automorphism of order $2$ on $\Yb_2$ and $\Yb_3$. Both  statements follow by considering the specialization of the branch points of $\pi$  to the projective lines with coordinates $x_i$, see \cite[\S5.1.2]{MichelDiss}. We conclude that $\dim_\CC V^{\langle \tau_1, \tau_2\rangle}=0$.

We conclude that $\rho_0$ is the sum of the three  irreducible $2$-dimensional representations $V_i$ of $\overline{G}$ with 
\[
\Tr(\rho_0|_{V_i}(\tau_j))=(-1)^{\delta_{i,j}}2.
\]
The irreducible $2$-dimensional representation of $\overline{G}$ that does not occur satisfies $\Tr(\rho_0(\tau_i))=2$ for all $i$. It is the representation labeled $\rho_9$ in the character table on \cite{GroupNames}.
\end{exa}

\vspace{5ex}\noindent {\small Irene Bouw, Duc Khoi Do, Stefan Wewers\\ Institut
	f\"ur Algebra und Zahlentheorie\\ Universit\"at
	Ulm\\  {\tt irene.bouw@uni-ulm.de,
		stefan.wewers@uni-ulm.de}}

\end{document}